\documentclass[10pt, reqno]{amsart}
\usepackage{amssymb, amsthm, amsmath, amsfonts}
\usepackage{graphics}
\usepackage{hyperref}
\usepackage{bbm}
\usepackage[all]{xy}
\usepackage{enumerate}
\usepackage[mathscr]{eucal}
\usepackage{thmtools}
\usepackage{xcolor}
\usepackage{extarrows}
\usepackage{ytableau}
\usepackage{enumitem}
\usepackage{zhlipsum}
\usepackage{cite}
\usepackage[english]{babel}
\usepackage{tikz-cd}[row sep=large, column sep=large]
\theoremstyle{plain}
\newtheorem{theorem}{Theorem}[section]
\newtheorem{lemma}[theorem]{Lemma}
\newtheorem{proposition}[theorem]{Proposition}

\newtheorem{corollary}[theorem]{Corollary}
\numberwithin{equation}{section}

\theoremstyle{definition}

\newtheorem{definition}[theorem]{Definition}
\newtheorem{example}[theorem]{Example}
\newtheorem{remark}[theorem]{Remark}

\newcommand{\C}{\mathscr C}
\newcommand{\D}{\mathscr D}
\newcommand{\Cp}{\mathscr C^{+}}
\newcommand{\Cm}{\mathscr C^{-}}
\newcommand{\I}{\mathfrak I}
\newcommand{\Bal}{\operatorname{Bal}}
\newcommand{\End}{\operatorname{End}}
\newcommand{\Aut}{\operatorname{Aut}}
\newcommand{\Hom}{\operatorname{Hom}}
\newcommand{\im}{\operatorname{im}}
\newcommand{\Inj}{\operatorname{Inj}}
\newcommand{\AF}{\operatorname{AF}}
\newcommand{\SAF}{\operatorname{SAF}}
\newcommand{\GL}{\operatorname{GL}}
\newcommand{\AGL}{\operatorname{AGL}}
\newcommand{\Sing}{\operatorname{Sing}}
\newcommand{\rank}{\operatorname{rank}}

\newcommand{\id}{\operatorname{id}}
\newcommand{\Ob}{\operatorname{Ob}}
\newcommand{\Mod}{{\textrm{-}\mathrm{Mod}}}
\newcommand{\VA}{{\mathrm{VA}}}
\newcommand{\SVA}{{\mathrm{SVA}}}

\title{Normalization graphs and Dold-Kan equivalences for generalized Reedy categories}
\author{Liping Li}
\address{School of Mathematics and Statistics, Hunan Normal University, Changsha 410081, China.}
\email{lipingli@hunnu.edu.cn}
\thanks{The author is partly supported by NSFC Grant No. 12571037.}

\keywords{Generalized Reedy category; standard module; Dold--Kan correspondence; normalization graph.}

\begin{document}

\begin{abstract}
We introduce a combinatorial criterion for the projectivity of standard modules over generalized Reedy categories, formulated in terms of a finite bipartite graph whose normalized weightings determine normalization idempotents and hence projective splittings of standard modules. We apply this framework to recover several classical normalization constructions and Dold--Kan--type equivalences, and to obtain new Dold--Kan--type equivalences for categories of finite-dimensional affine, semilinear, and semiaffine spaces over a finite field. These equivalences also yield explicit descriptions of uniformly continuous representations of the corresponding infinite transformation monoids. Together with our previous work \cite{LiHom} on Hom-spaces between standard modules, the results provide a combinatorial machinery, based mainly on graph theory and linear algebra, for establishing Dold--Kan--type equivalences for generalized Reedy categories.

\end{abstract}

\maketitle

\section{Introduction}

\subsection{Motivation}

The classical Dold--Kan correspondence identifies simplicial modules with nonnegatively graded chain complexes \cite{Dold1958,Kan1958}, and is one of the fundamental examples in which a category of representations defined by rather complicated combinatorial operators admits a much simpler algebraic description. Many analogues have subsequently appeared for categories carrying related combinatorial structures. Among them are the normalization theory for cyclic modules developed by Dwyer and Kan \cite{DwyerKan1,DwyerKan2}, the equivalence between cubical abelian groups with connections and chain complexes by Brown--Higgins \cite{BrownHiggins}, the Dold--Kan correspondence for dendroidal abelian groups by Guti\'{e}rrez--Lukacs--Weiss \cite{GLW}, Pirashvili's equivalence for Segal's category $\Gamma$ \cite{Pirashvili}, the classification of $\mathrm{FI}^{\sharp}$-modules in terms of symmetric-group representations by Church--Ellenberg--Farb \cite{CEF}, and Kuhn's equivalence for finite-dimensional vector spaces over a finite field in non-describing characteristic \cite{Kuhn}. More general Morita-theoretic frameworks encompassing many such examples were developed by Helmstutler \cite{Helm}, Lack and Street \cite{LS, St}, S{\l}omi\'{n}ska \cite{Slo}, and, in the homotopical setting, Kaygun-Kaya \cite{KaygunKaya} and Walde \cite{Walde}.

Concrete proofs of these results are usually strongly adapted to the particular combinatorics of the category under consideration. For instance,
\begin{itemize}
\item for the simplex category one uses the simplicial identities to construct the classical normalization operator;

\item for $\Gamma$, normalization is expressed through cross-effects and an inclusion--exclusion procedure;

\item for $\VA_q$, Kuhn's argument ultimately relies on the idempotent in the full matrix semigroup algebra constructed by Kov\'{a}cs \cite{Kovacs}.
\end{itemize}
However, examining these examples suggests a common representation-theoretic mechanism described as follows: starting with the representable projectives, one first performs a normalization procedure to obtain projective summands, shows that these normalized modules form another family of projective generators, computes the Hom-spaces between them, and finally applies Morita theory for rings with several objects to deduce the desired equivalence of module categories.

A further observation is particularly relevant to generalized Reedy categories introduced by Berger--Moerdijk \cite{BergerMoerdijk2011}. In most of the examples above, the normalized projectives are precisely the \emph{standard modules} associated with the Reedy structure. Thus, in this setting, the problem of establishing a Dold--Kan--type equivalence for a generalized Reedy category $\C$ is reduced to two essentially independent questions:
\begin{enumerate}
\item when is every standard module of $\C$ projective?
\item what are the Hom-spaces between these standard modules?
\end{enumerate}
A satisfactory answer to the first question identifies the standard modules as normalized projectives, while a transparent answer to the second identifies explicitly the linear category to which $\C$ is Morita equivalent.

The second part of this machinery was developed in our previous paper \cite{LiHom}. There we introduced elementary combinatorial constructions, based mainly on graphs and linear algebra, which make it possible to describe homomorphisms between standard modules for a broad class of generalized Reedy categories. The purpose of the present paper is to complete the first part of the picture using tools of the same elementary nature. Instead of searching directly for category-specific normalization idempotents, we associate to every object a finite bipartite graph, called the \emph{normalization graph}. Projectivity of standard modules is translated into the existence of a normalized weighting on this graph with prescribed boundary values. Once such a weighting exists, the corresponding normalization operator idempotent is obtained automatically. In this way the usual direction of argument is reversed as follows:
\[
\text{normalization graphs}
   \ \Longrightarrow\
\text{normalized weightings}
   \ \Longrightarrow\
\text{normalization idempotents}.
\]
This is parallel to the Hom-space machinery developed in our previous paper \cite{LiHom}, which may be summarized schematically as
\[
\text{fiber graphs}
   \ \Longrightarrow\
\text{compatible families of coefficients}
   \ \Longrightarrow\
\text{homomorphisms between standard modules}.
\]
Thus the two papers address the two complementary ingredients described above by closely related elementary methods. Together they provide a uniform combinatorial machinery, based mainly on graph theory and linear algebra, for establishing Dold--Kan--type equivalences for generalized Reedy categories.

When the standard modules are projective and explicit normalization idempotents are available, their Hom-spaces can often be computed directly by Yoneda's lemma. The fiber-graph machinery of \cite{LiHom} becomes particularly useful when standard modules fail to be projective, or when explicit normalization idempotents are unavailable, since it computes Hom-spaces without requiring such a splitting. Such nonprojective phenomena already occur naturally in several Dold--Kan--type settings; see, for instance, the cyclic theory of Dwyer and Kan \cite{DwyerKan1,DwyerKan2}, the representation theory of finite sets and all maps studied by Powell and Vespa \cite{Powell, PowellVespa}, and the examples considered in \cite{LiRel}. Thus the projective-normalization framework of the present paper and the more general fiber-graph machinery of \cite{LiHom} should be viewed as complementary parts of the same representation-theoretic approach.

\subsection{Main results}

We now briefly describe the main results of the paper. Let $\C$ be a locally finite category with a generalized Reedy structure $(\Cp, \Cm, d)$, $k$ a commutative ring, and
\[
P_x=k\C(x,-), \qquad \Delta_x=P_x/\I_x
\]
be the representable and standard modules associated with an object $x$, where $\I_x$ is the submodule generated by all noninvertible morphisms in $\Cm$ with source $x$. The central construction of the paper is a finite bipartite graph $\mathcal N_x$, called the \emph{normalization graph}. Its black vertices are the endomorphisms of $x$, while its white vertices record the fibers of left composition by unfactorizable morphisms in $\Cm$ with source $x$. More precisely, if $s:x\to y$ is such a morphism and $h$ lies in the image of
\[
s^{\ast}:\End_{\C}(x)\longrightarrow \C(x,y),\qquad f\longmapsto sf,
\]
then the corresponding white vertex records the fiber
\[
F_{s,h}=\{f\in\End_{\C}(x)\mid sf=h\}.
\]
The automorphisms of $x$ form the boundary of $\mathcal N_x$.

A weighting on the black vertices is called \emph{balanced} if the sum of its coefficients over every normalization fiber is zero. Our first main result identifies these balanced weightings with homomorphisms from the standard module to the representable module (Proposition \ref{prop:balanced-hom}):
\[
\Bal_k(\mathcal N_x) \cong \Hom_{k\C}(\Delta_x,P_x).
\]
Under this correspondence, projectivity of $\Delta_x$ becomes a boundary-value problem on the normalization graph: $\Delta_x$ is projective if and only if there exists a balanced weighting whose restriction to the boundary is the delta function at the identity (we call it a \emph{normalized weighting}). Equivalently, the boundary map
\[
\partial_x:\Bal_k(\mathcal N_x)\longrightarrow kG_x
\]
is surjective; see Theorem \ref{thm:main}. Thus the projectivity problem for standard modules is reduced to solving an explicit finite system of linear equations.

The second main result shows that a solution of this linear problem automatically produces a genuine normalization idempotent. Explicitly, given a normalized weighting $\epsilon$ on $\mathcal{N}_x$, set
\[
\Phi_x = \sum_{f \in \End_{\C}(x)} \epsilon(f)f.
\]
Then
\[
\Phi_x^2 = \Phi_x, \qquad P_x \Phi_x \cong \Delta_x, \qquad \I_x = P_x (1 - \Phi_x).
\]
Consequently, the classical normalization operators can be viewed as explicit solutions of the linear boundary-value problem encoded by $\mathcal N_x$. For details, see Theorem \ref{thm:normalization-correspondence}.

We also characterize the uniqueness of normalization idempotents. Once a normalization exists at $x$, it is unique if and only if the singular ideal $kD_x$ has a two-sided identity. Assuming that every object admits a normalization, we further show that the standard modules induce an equivalence
\[
\C \Mod \simeq \prod_{x \in \Ob(\C)} kG_x \Mod
\]
if and only if every positive morphism admits a left inverse and the normalization idempotent at every object is unique. This gives a necessary and sufficient criterion, within our generalized Reedy framework, for the standard-module construction to yield an equivalence with representations of the core groupoid, complementing Street's sufficient criterion
in \cite{St}. For details, see Corollary~\ref{cor:normalization-uniqueness} and Theorem~\ref{thm:core-groupoid-criterion}.

We illustrate this machinery by several examples.
\begin{itemize}
\item For the simplex category, the normalization equations reduce to the alternating weighting on a Boolean cube, recovering the classical simplicial normalization projector in \cite{Dold1958, Kan1958}.

\item For the span categories considered in \cite{LiHom}, the commuting diagonal idempotents arising from proper subobjects give an explicit normalized weighting, showing that all standard modules are projective over an arbitrary commutative coefficient ring.

\item For the category $\VA_q$ of finite-dimensional $\mathbb F_q$-vector spaces and linear maps, assuming that $q$ is invertible in $k$, we show that Kov\'{a}cs' construction in \cite{Kovacs} of the identity in the singular matrix ideal may be interpreted as a solution of the linear equations associated with the normalization graph, and the complementary Kov\'{a}cs–Kuhn idempotent is precisely the associated normalization idempotent.

\item Finally, we apply a slight variant of the criterion to Pirashvili's projective generators for $\Gamma$-modules in \cite{Pirashvili}. The relevant normalization graph again contains a Boolean cube, whose alternating weighting gives the usual normalization operator. This recovers the projective generators $(t^*)^{\otimes n}$, and hence Pirashvili's Dold--Kan--type equivalence.
\end{itemize}

As a further application, we consider the category $\AF_q$ of finite-dimensional affine spaces over $\mathbb F_q$. The linear Kov\'{a}cs--Kuhn normalization extends to the affine endomorphism monoid by assigning weight zero to all endomorphisms with nonzero translation part, and hence again yields normalized projective standard modules. However, unlike the linear case, these standard modules are not Hom-orthogonal. More precisely, if $\Delta_n$ denotes the standard module attached to the $n$-dimensional affine space, then $\Hom_{k\AF_q} (\Delta_n,\Delta_m)$ vanishes unless $m = n$ or $m = n-1$. In the latter case, the Hom-space as a $k$-module is freely generated by the affine hyperplane embeddings whose images do not contain 0. Consequently, the representation category of $\AF_q$ is equivalent to the representation category of a thin $k$-linear category with nonzero morphisms only in degrees $0$ and $-1$, and every composite of two degree $-1$ morphisms is zero; see Theorem \ref{thm:affine-dold-kan}.

The affine example also leads to a useful transfer principle for normalization idempotents. If $\D$ is a wide generalized Reedy subcategory of $\C$ and every negative morphism of $\C$ is obtained from one in $\D$ by postcomposition with an automorphism of its codomain, then any family of normalization idempotents for $\D$ also serves as a family of normalization idempotents for $\C$. Besides providing a conceptual explanation for the passage from linear to affine spaces, this principle applies to semilinear and semiaffine enlargements. Combined with the computation of Hom-spaces, this yields Dold--Kan--type equivalences for the categories of semilinear and semiaffine maps. Together with the reconstruction result established in \cite{LiRel}, these equivalences provide transparent descriptions of uniformly continuous representations of the corresponding infinite transformation monoids. For details, see Proposition~\ref{prop:transfer-normalization}, Example~\ref{examples}, and Corollaries~\ref{cor:semilinear-semiaffine} and~\ref{cor:universal-transformation-monoids}.

\subsection{Organization of the paper}

The paper is organized as follows. In Section~2, we recall the necessary background on generalized Reedy categories and standard modules, introduce normalization graph and its normalized weightings, and establish the main criteria for projectivity of standard modules as well as the construction of normalization idempotents. We apply the general machinery to several examples in Section 3, and describe new applications to several categories in finite geometry in the last section.

\subsection{Acknowledgement on the use of AI}

The research program, principal ideas and strategy, and the main mathematical constructions of this work were conceived by the author. During the preparation of the paper, ChatGPT (OpenAI) was used as an auxiliary tool for exploratory mathematical discussion, assistance in locating relevant literature, checking intermediate arguments, and improving the exposition. All mathematical statements and proofs, as well as all bibliographic references, were independently written and verified by the author, who takes full responsibility for the content of the paper.

\section{The main construction and results}

\subsection{Generalized Reedy categories and their representations}

In this section we describe some elementary definitions and facts on generalized Reedy categories and their representation theory. Recall from \cite{BergerMoerdijk2011} that a \emph{generalized Reedy structure} on a small skeletal category $\C$ is a triple $(\Cp, \Cm, d)$ with $d: \Ob(\C) \to \mathbb N$ a \emph{degree map}, and $\Cp$ and $\Cm$ wide subcategories satisfying the following axioms:
\begin{enumerate}[label=(R\arabic*)]
\item Every morphism $f$ of $\C$ admits a factorization $f = f^+ f^-$ with $f^+$ a morphism in $\Cp$ and $f^-$ a morphism in $\Cm$, and this factorization is unique up to an intermediate automorphism.

\item Every noninvertible morphism in $\Cp$ strictly raises degree, and every noninvertible morphism in $\Cm$ strictly lowers degree.

\item $\Cp \cap \Cm$ is the subgroupoid of isomorphisms.

\item If $f \in \Cm(x,y)$ and $\sigma \in \Aut_{\C}(y)$ satisfy $\sigma f = f$, then $\sigma = 1_y$.
\end{enumerate}
To suppress technical issues, we also assume that $\C$ is locally finite, so every Hom-set is finite, and that for each $n \in \mathbb{N}$, there are only finitely many objects $x$ such that $d(x) = n$.

For an object $x$, write
\[
G_x=\Aut_{\C}(x),\qquad E_x=\End_{\C}(x) = \C(x,x), \qquad D_x = E_x \setminus G_x.
\]
It is clear from the above definition that
\[
\C(x, y) \cong \bigsqcup_{z} \Cp(z, y) \times_{G_z} \Cm(x, z) = \bigsqcup_{\substack{d(z) \leqslant d(x) \\ d(z) \leqslant d(y) }} \Cp(z, y) \times_{G_z} \Cm(x, z).
\]

\begin{definition}
A noninvertible morphism $f: x \to y$ in $\Cm$ is called \emph{unfactorizable} if it cannot be written as $f = gh$ with both $g$ and $h$ noninvertible morphisms in $\Cm$. Similarly, one can define unfactorizable morphisms in $\Cp$.
\end{definition}

The following elementary properties of unfactorizable morphisms have been established in \cite{LiHom}.
\begin{enumerate}
\item every noninvertible morphism in $\Cp$ or $\Cm$ can be written as a finite composite of unfactorizable morphisms.

\item The composite of an unfactorizable morphism and an automorphism is still unfactorizable.
\end{enumerate}
Consequently, for $x, y \in \Ob(\C)$, unfactorizable morphisms in $\Cm(x, y)$ form a left $G_y$-set, so we choose one representative from each orbit to get a set $\mathbb{U}_{x, y} = \{h_i\}_{i \in I}$ of representative unfactorizable morphisms in $\Cm(x, y)$, and define
\[
\mathbb{U}_x = \bigsqcup_{y \in \Ob(\C)} \mathbb{U}_{x, y},
\]
which is a finite set since there are only finitely many objects $x$ for which $\Cm(x, y) \neq \varnothing$ since $d(y) \leqslant d(x)$.

Now we turn to representation theory of $\C$. Let $k$ be a commutative ring with identity. By definition, a \emph{$\C$-module} (over $k$) is a covariant functor from $\C$ to $k \Mod$, the category of $k$-modules. The \emph{representable module} at an object $x$ is $ P_x = k\C(x,-)$, which is projective in $\C \Mod$. Let $\I_x \subseteq P_x$ be the submodule generated by all noninvertible morphisms in $\Cm$ with source $x$. Explicitly, $\I_x(y)$ is the $k$-submodule of $k\C(x,y)$ spanned by the morphisms $f:x\to y$ such that $f^-$ in the Reedy factorization of $f$ is noninvertible. The \emph{standard module} attached to $x$ is $\Delta_x = P_x/\I_x$. Denote the natural quotient map by $q_x: P_x \to \Delta_x$. By the definitions, one clearly has $\I_x(x) = kD_x$ and $\Delta_x(x) \cong kG_x$ as $k$-modules.

\subsection{Normalization graphs and normalized weightings}

In this subsection we introduce the main combinatorial construction used to characterize projectivity of standard modules.

Given an object $x$ and a representative unfactorizable morphism $s: x \to y$ in $\mathbb{U}_x$, left composition with $s$ defines a map of finite sets
\[
s^{\ast}: E_x \longrightarrow \C(x,y),\qquad f \longmapsto sf.
\]
For each $h \in \im(s^{\ast})$, define the fiber
\[
F_{s,h} = \{ f \in E_x \mid sf = h\}.
\]

\begin{definition} \label{def:normalization-graph}
The \emph{normalization graph} $\mathcal N_x$ is the bipartite graph with the following vertices and edges:
\begin{itemize}
\item Its black vertices are endomorphisms of $x$, namely $V_{\mathrm b}(\mathcal N_x) = E_x$.

\item Its white vertices are nonempty elementary fibers:
\[
V_{\mathrm w}(\mathcal N_x) = \{F_{s,h} \mid s \in \mathbb{U}_x,\ F_{s, h} \neq \varnothing\}.
\]

\item A black vertex $f$ is joined to a white vertex $F_{s,h}$ precisely when $f \in F_{s,h}$.
\end{itemize}
The subset $\partial\mathcal N_x = G_x$ of black vertices is called the \emph{boundary}; the vertices in $D_x = E_x \setminus G_x$ are called \emph{interior vertices}.
\end{definition}

Thus every white vertex simply records one fiber of one unfactorizable morphism. If two unfactorizable maps differ only by postcomposition with an automorphism of their codomain, the corresponding fibers are essentially the same. Thus we may use only elements in $\mathbb{U}_x$ to construct the graph.

\begin{definition} \label{def:balanced}
A \emph{balanced $k$-weighting} of $\mathcal N_x$ is a function $\epsilon: E_x \to k$ such that
\[
\sum_{f \in F_{s,h}} \epsilon(f) = 0
\]
for every white vertex $F_{s,h}$. Denote the $k$-module of balanced weightings by $\Bal_k(\mathcal N_x)$.
\end{definition}

Given a weighting $\epsilon:E_x\to k$, we obtain an element
\[
\Phi_{\epsilon} = \sum_{f \in E_x} \epsilon(f)f
\]
in $kE_x$. Conversely, given an element $\Phi \in kE_x$, its coefficient function is a weighting. Thus weightings correspond bijectively to elements in $kE_x$.

The following result gives a close relationship between balanced $k$-weightings and normalization operators.

\begin{lemma}\label{lem:balanced-annihilator}
The following are equivalent:
\begin{enumerate}
\item $\epsilon$ is balanced;

\item $s\Phi_{\epsilon}=0$ for every $s\in\mathbb U_x$;

\item $u\Phi_{\epsilon}=0$ for every noninvertible morphism $u$ with source $x$ in $\Cm$.
\end{enumerate}
\end{lemma}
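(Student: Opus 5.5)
I will prove the cycle of implications (1)$\Leftrightarrow$(2), (3)$\Rightarrow$(2), and (2)$\Rightarrow$(3). Throughout, $s\Phi_\epsilon$ is read in $kC(x,y)$, that is, in $P_x(y)$.

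For (1)$\Leftrightarrow$(2), fix $s\colon x\to y$ in $\mathbb U_x$. Grouping the sum $s\Phi_\epsilon=\sum_{f\in E_x}\epsilon(f)\,sf$ according to the value $h=sf$ gives
\[
s\Phi_\epsilon=\sum_{h\in\im(s^\ast)}\Bigl(\sum_{f\in F_{s,h}}\epsilon(f)\Bigr)h .
\]
The morphisms $h\in\C(x,y)$ are distinct basis elements of the free $k$-module $k\C(x,y)$. Hence $s\Phi_\epsilon=0$ if and only if every fiber sum vanishes. Since the fibers $F_{s,h}$ with $h\in\im(s^\ast)$ are exactly the nonempty fibers, that is, the white vertices attached to $s$, this is precisely the balance condition at those vertices. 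Letting $s$ range over $\mathbb U_x$ gives the equivalence.

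The implication (3)$\Rightarrow$(2) is immediate, because each $s\in\mathbb U_x$ is a noninvertible morphism in $\Cm$ with source $x$.

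For (2)$\Rightarrow$(3), let $u$ be a noninvertible morphism in $\Cm$ with source $x$. I plan three steps.
\begin{enumerate}
\item By the recalled property (1) of unfactorizable morphisms, write $u=u_r\cdots u_1$ as a composite of unfactorizable morphisms in $\Cm$. Then $u=v\,u_1$, where $v$ is either an identity or a composite of morphisms in $\Cm$; in either case $v$ is a morphism of $\C$.
\item Since $u_1\colon x\to y$ is unfactorizable, it lies in the $G_y$-orbit of some representative, so $u_1=\sigma s$ with $s\in\mathbb U_{x,y}$ and $\sigma\in G_y$. The only point to check is that the orbit decomposition really covers all unfactorizable morphisms with source $x$; this holds by the construction of $\mathbb U_{x,y}$ as a set of orbit representatives, which uses property (2).
\item Since left composition is $k$-linear, it follows that $u\Phi_\epsilon=v\sigma\,(s\Phi_\epsilon)=0$.
\end{enumerate}

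The argument has no serious obstacle. The only care needed is the bookkeeping in (1)$\Leftrightarrow$(2): distinct $h$ give linearly independent basis vectors, and the white vertices are exactly the nonempty fibers over $\mathbb U_x$.
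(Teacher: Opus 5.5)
Your proof is correct and follows the paper's argument. You get (1)$\Leftrightarrow$(2) by grouping $s\Phi_\epsilon$ according to the value $h=sf$, and (2)$\Rightarrow$(3) by factoring $u$ into unfactorizable morphisms and rewriting the factor with source $x$ as $\sigma s$ with $s\in\mathbb U_x$; your observation that only this first factor needs rewriting is, if anything, slightly more precise than the paper's phrasing.
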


\begin{proof}
Fix $s\in\mathbb U_x$. Grouping the terms according to the value of the composite $sf$, we obtain
\[
s\Phi_{\epsilon} = \sum_{f\in E_x}\epsilon(f)\,sf = \sum_{h\in\im(s^\ast)} \left( \sum_{f\in F_{s,h}}\epsilon(f) \right)h.
\]
Therefore $s\Phi_{\epsilon}=0$ if and only if the coefficient of every morphism $h\in\im(s^\ast)$ vanishes, which is precisely the balancedness condition. Hence {\rm (i)} and {\rm (ii)} are equivalent. The equivalence of {\rm (ii)} and {\rm (iii)} follows from the fact that every noninvertible morphism in $\Cm$ can be written as a finite composite of unfactorizable morphisms, and each unfactorizable morphism $s: x \to z$ can further be written as a composite $\sigma s'$ with $\sigma \in G_z$ and $s' \in \mathbb{U}_x$.
\end{proof}

Next we show that balanced weightings on normalization graphs are exactly the morphisms from the standard module to the representable. Given $\Phi \in kE_x$, let $R_\Phi: P_x \to P_x$ be the unique natural transformation given by Yoneda's lemma.

\begin{proposition}\label{prop:balanced-hom}
There is a canonical $k$-module isomorphism
\[
 \Bal_k(\mathcal N_x)  \cong \Hom_{k\C}(\Delta_x, P_x).
\]
Under this isomorphism, a balanced weighting $\epsilon$ corresponds to the unique map $j_{\epsilon}: \Delta_x \to P_x$ satisfying $j_{\epsilon} q_x = R_{\Phi_{\epsilon}}$.
\end{proposition}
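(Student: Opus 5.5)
The proof will combine Yoneda's lemma with the universal property of the quotient $q_x: P_x \to \Delta_x = P_x/\I_x$. Since $P_x$ is representable, Yoneda gives a $k$-module isomorphism $\Hom_{k\C}(P_x,P_x) \cong P_x(x) = kE_x$. Every endomorphism of $P_x$ is therefore of the form $R_\Phi$ for a unique $\Phi \in kE_x$, namely $\Phi = R_\Phi(1_x)$. Here $R_\Phi$ sends $g \in \C(x,y)$ to $g\Phi$; this is the convention that makes $R_\Phi$ natural for the covariant action by postcomposition. Because $q_x$ is an epimorphism, precomposition with $q_x$ identifies $\Hom_{k\C}(\Delta_x,P_x)$ with the $k$-submodule of $\Hom_{k\C}(P_x,P_x)$ consisting of maps that vanish on $\I_x$. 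The statement therefore reduces to the following claim: $R_{\Phi}$ kills $\I_x$ if and only if the coefficient function of $\Phi$ is balanced.

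To prove the claim, I will use that $\I_x$ is generated as a submodule by the noninvertible morphisms $u$ of $\Cm$ with source $x$. A natural transformation $P_x \to P_x$ vanishes on $\I_x$ exactly when it vanishes on these generators. This holds because $\I_x(y)$ is spanned by the elements $g u$ with $g$ arbitrary and $u$ such a generator, and $R_\Phi(gu) = g\,R_\Phi(u)$ by naturality. Hence $R_\Phi|_{\I_x}=0$ if and only if $u\Phi = 0$ for every noninvertible $u \in \Cm$ with source $x$. By Lemma~\ref{lem:balanced-annihilator}, (iii)$\Leftrightarrow$(i), this holds exactly when $\epsilon$ is balanced, where $\Phi = \Phi_\epsilon$.

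Putting the pieces together, the map $\epsilon \mapsto j_\epsilon$ is well defined, because a balanced $\epsilon$ yields $R_{\Phi_\epsilon}$ vanishing on $\I_x$, which factors uniquely through $q_x$. It is $k$-linear because $\epsilon\mapsto \Phi_\epsilon\mapsto R_{\Phi_\epsilon}$ is linear. It is injective because $q_x$ is epi and Yoneda is bijective. It is surjective because for any $j:\Delta_x\to P_x$ the composite $jq_x$ equals $R_\Phi$ with $\Phi = jq_x(1_x)$, and $R_\Phi$ kills $\I_x$, so the coefficient function of $\Phi$ is balanced. The characterization $j_\epsilon q_x = R_{\Phi_\epsilon}$ is built into the construction.

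The only step needing care is the description of $\I_x(y)$ as the span of composites $gu$. Here I would check that a morphism $f$ with noninvertible $f^-$ is literally $f^+ f^-$ with $f^-$ one of the generators, and that conversely $g u$ has noninvertible negative part. The converse is not actually needed, since the definition already takes $\I_x$ to be the submodule generated by these $u$. I expect no serious obstacle beyond keeping the left/right composition conventions consistent with the paper's $s\Phi_\epsilon$ notation.
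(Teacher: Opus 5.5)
Your proposal is correct and follows essentially the same route as the paper: identify $\operatorname{End}(P_x)$ with $kE_x$ by Yoneda, note that $R_\Phi$ factors (uniquely, since $q_x$ is epi) through $\Delta_x$ exactly when it kills the generators of $\I_x$, and convert this into balancedness via Lemma~\ref{lem:balanced-annihilator}. The only differences are cosmetic: you use all noninvertible negative morphisms as generators (invoking (iii)$\Leftrightarrow$(i)) rather than the unfactorizable ones, and you spell out linearity, injectivity and surjectivity explicitly where the paper leaves them implicit.
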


\begin{proof}
A natural transformation $R_\Phi:P_x\to P_x$ factors through the quotient $q_x:P_x\to\Delta_x$ if and only if it vanishes on $\I_x$. Since $\I_x$ is generated by the unfactorizable morphisms $u$ in $\Cm$, this is equivalent to $u \Phi = 0$ for every such $u$. By Lemma \ref{lem:balanced-annihilator}, this is further equivalent to the coefficient function of $\Phi$ being balanced. Since $q_x$ is an epimorphism, the factorization through $\Delta_x$ is unique.
\end{proof}

Recall that boundary vertices in $\mathcal{N}_x$ are precisely elements in $G_x$. Thus we define a boundary map
\[
\partial_x: \Bal_k(\mathcal N_x) \longrightarrow kG_x, \quad \partial_x(\epsilon) = \sum_{g \in G_x} \epsilon(g)g.
\]
Equivalently, if $\epsilon$ corresponds to $\Phi_{\epsilon} \in P_x(x) = kE_x$, then $\partial_x(\epsilon) = q_x(\Phi_{\epsilon}) \in kG_x \cong \Delta_x(x)$.

\begin{definition}
A balanced weighting $\epsilon \in \Bal_k(\mathcal N_x)$ is called \emph{normalized} if its boundary value is the delta function at the identity, that is, $\epsilon(1_x) = 1$ and $\epsilon(g) = 0$ for every $g \in G_x \setminus \{1_x\}$.
\end{definition}

\subsection{Criteria for projectivity of standard modules}

Now we are ready to describe a combinatorial criterion for the projectivity of standard modules.

\begin{theorem}\label{thm:main}
For an object $x$ of $\C$, the following conditions are equivalent:
\begin{enumerate}[label=(\arabic*)]
\item The standard module $\Delta_x$ is projective.

\item There exists a normalized weighting $\epsilon \in \Bal_k(\mathcal N_x)$.

\item The boundary map $\partial_x$ is surjective.
\end{enumerate}
\end{theorem}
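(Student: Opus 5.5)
The argument goes through Proposition~\ref{prop:balanced-hom}, which identifies $\Bal_k(\mathcal N_x)$ with $\Hom_{k\C}(\Delta_x,P_x)$. It also uses the observation that $\partial_x(\epsilon)=q_x(\Phi_\epsilon)$ is the element $(q_x j_\epsilon)_x(\bar 1_x)\in \Delta_x(x)\cong kG_x$. Here $\bar 1_x=q_x(1_x)$ is the canonical generator of $\Delta_x$, and the identity $q_x j_\epsilon q_x = q_x R_{\Phi_\epsilon}$ evaluated at $1_x$ gives $(q_xj_\epsilon)(\bar 1_x)=q_x(\Phi_\epsilon)$. By Yoneda, since $\Delta_x$ is cyclic on $\bar 1_x$, an endomorphism of $\Delta_x$ is determined by the image of $\bar 1_x$. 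Hence $q_x j_\epsilon=\id_{\Delta_x}$ exactly when $\partial_x(\epsilon)=1_x$, i.e.\ when $\epsilon$ is normalized.

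\textbf{(1)$\Rightarrow$(2).} If $\Delta_x$ is projective, the epimorphism $q_x:P_x\to\Delta_x$ splits. So there is $j:\Delta_x\to P_x$ with $q_xj=\id$. By Proposition~\ref{prop:balanced-hom}, $j=j_\epsilon$ for a unique balanced $\epsilon$, and by the observation above $\partial_x(\epsilon)=1_x$. Thus $\epsilon$ is normalized.

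\textbf{(2)$\Rightarrow$(1).} A normalized $\epsilon$ gives $j_\epsilon$ with $q_xj_\epsilon=\id_{\Delta_x}$. So $\Delta_x$ is a direct summand of the projective $P_x$, hence projective.

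\textbf{(2)$\Leftrightarrow$(3).} Since $1_x\in kG_x$, surjectivity of $\partial_x$ gives a preimage of $1_x$, which is a normalized weighting. Conversely, suppose $\epsilon$ is normalized. The key point is that $\Bal_k(\mathcal N_x)$ carries a right $kG_x$-action compatible with $\partial_x$. Via Proposition~\ref{prop:balanced-hom}, $\Bal_k(\mathcal N_x)$ is $\Hom(\Delta_x,P_x)$. Precomposition by the automorphisms of $\Delta_x$ induced by $g\in G_x$ gives an action which, on elements $\Phi\in kE_x$, is multiplication $\Phi\mapsto \Phi g$. Directly: if $u\Phi=0$ for all noninvertible $u\in\Cm$ with source $x$, then $u\Phi g=0$, so by Lemma~\ref{lem:balanced-annihilator} $\Phi g$ is balanced. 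Moreover $q_x(\Phi g)=q_x(\Phi)g$, because $fg\in D_x$ iff $f\in D_x$; so $\partial_x$ is a map of right $kG_x$-modules. Then for any $a=\sum a_g g\in kG_x$, the element $\sum a_g\,\Phi_\epsilon g$ is balanced with boundary $1_x\cdot a=a$. Hence $\partial_x$ is surjective.

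The only step needing care is the boundary identification $\partial_x(\epsilon)=(q_xj_\epsilon)(\bar 1_x)$ and the $G_x$-equivariance. This rests on two facts: $D_x$ is a two-sided ideal closed under multiplication by $G_x$, and the quotient $q_x$ at $x$ kills exactly $kD_x$, as recorded earlier ($\I_x(x)=kD_x$). Both are routine.
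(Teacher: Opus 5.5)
Your argument is correct and is essentially the paper's proof. You get (1)$\Leftrightarrow$(2) from Proposition~\ref{prop:balanced-hom} by evaluating $q_xj_\epsilon$ at the generator $q_x(1_x)$, and (2)$\Rightarrow$(3) from the fact that $\Phi_\epsilon g$ is still balanced with boundary value $g$. There is one harmless slip in your motivating aside: precomposing $j_\epsilon$ with the automorphism $[f]\mapsto[fg]$ of $\Delta_x$ corresponds to $\Phi\mapsto g\Phi$, while $\Phi\mapsto\Phi g$ comes from postcomposing with $R_g$ on $P_x$; your direct verification does not depend on this.
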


\begin{proof}
\noindent
$(1) \Rightarrow (2)$. Let $j: \Delta_x \to P_x$ be a section of $q_x$, which exists by the assumption. By Proposition~\ref{prop:balanced-hom}, the composite $jq_x: P_x \to P_x$ corresponds to a unique balanced weighting $\epsilon$, or equivalently to a unique element $\Phi_{\epsilon} \in kE_x$. Evaluating the identity $q_xj = \id_{\Delta_x}$ at the canonical generator $[1_x] \in \Delta_x(x)$ gives
\[
 q_x(\Phi_{\epsilon}) = [1_x].
\]
Under $\Delta_x(x) \cong kG_x$, this means exactly $\epsilon (1_x) = 1$ and $\epsilon (g) = 0$ for all $g \neq 1_x$, namely $\epsilon$ is normalized.

\medskip
\noindent
$(2) \Rightarrow (1)$. By Proposition~\ref{prop:balanced-hom}, one gets a map $j_{\epsilon}: \Delta_x \to P_x$ such that $j_{\epsilon}q_x = R_{\Phi_{\epsilon}}$. Since $[1_x] = q_x(1_x)$, evaluating this identity at $1_x$ gives
\[
j_{\epsilon}([1_x]) = j_{\epsilon}q_x(1_x) = R_{\Phi_{\epsilon}}(1_x) = \Phi_{\epsilon}.
\]
The normalized boundary condition says $q_x(\Phi_{\epsilon})=[1_x]$, so one has
\[
(q_xj_{\epsilon})([1_x]) = q_x(\Phi_{\epsilon}) = [1_x].
\]
Since $\Delta_x$ is generated by $[1_x]$, it follows that $q_xj_{\epsilon}=\id_{\Delta_x}$. Thus $q_x$ splits, and $\Delta_x$ is projective.

\medskip
\noindent
$(2) \Leftrightarrow (3)$. Clearly (3) implies (2). It remains to prove (2) implies (3). Suppose that $\epsilon$ is normalized. For $g \in G_x$, right multiplication by $g$ preserves balanced elements: if $\Phi_{\epsilon}$ is annihilated by every unfactorizable $s$, then
\[
 s(\Phi_{\epsilon} g) = (s\Phi_{\epsilon})g = 0.
\]
Moreover the boundary value of $\Phi_{\epsilon} g$ is $g$. Hence every basis element $g \in G_x$ lies in the image of $\partial_x$, so $\partial_x$ is surjective.
\end{proof}

We can use the following elementary criterion to show that standard modules are not projective for many concrete examples.

\begin{corollary}\label{cor:identity-fiber-obstruction}
Let $x$ be an object of $\C$. If there exists an unfactorizable morphism $s: x \to y$ in $\Cm$ such that the normalization fiber
\[
F_{s,s} = \{f \in E_x \mid sf=s \}
\]
is contained in $G_x$, then $\Delta_x$ is not projective.
\end{corollary}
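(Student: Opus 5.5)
Assume for contradiction that $\Delta_x$ is projective. Then Theorem~\ref{thm:main} provides a normalized weighting $\epsilon \in \Bal_k(\mathcal N_x)$, and the aim is to show that the hypothesis $F_{s,s}\subseteq G_x$ is incompatible with this.

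First I have to make sure the fiber $F_{s,s}$ actually occurs as a white vertex of $\mathcal N_x$. It is nonempty, since $1_x \in F_{s,s}$. If $s$ is not one of the chosen representatives, write $s=\sigma s'$ with $s'\in\mathbb U_x$ and $\sigma\in G_y$. Then $sf=s$ holds if and only if $s'f=s'$, so $F_{s,s}=F_{s',s'}$ and it is a white vertex. Alternatively, one can bypass the graph: by Lemma~\ref{lem:balanced-annihilator}(iii), $s\Phi_\epsilon=0$ for the noninvertible morphism $s\in\Cm$. Comparing the coefficient of $h=s$ in $s\Phi_\epsilon=\sum_h\bigl(\sum_{f\in F_{s,h}}\epsilon(f)\bigr)h$ then gives
\[
\sum_{f\in F_{s,s}}\epsilon(f)=0 .
\]

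Next, because $F_{s,s}\subseteq G_x$, every term in this sum is a boundary value of $\epsilon$. A normalized weighting vanishes on $G_x\setminus\{1_x\}$ and takes the value $1$ at $1_x$, and $1_x\in F_{s,s}$. The sum therefore equals $\epsilon(1_x)=1$, so $1=0$ in $k$, which is a contradiction. This assumes $k$ is nonzero, which is implicit in the paper.

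There is no real obstacle in this argument. The only care needed is the identification of $F_{s,s}$ with a fiber indexed by a representative in $\mathbb U_x$, or equivalently the direct appeal to Lemma~\ref{lem:balanced-annihilator}(iii).
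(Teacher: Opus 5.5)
Your proof is correct and is essentially the paper's argument: take a normalized weighting from Theorem~\ref{thm:main}, apply balancedness to the fiber $F_{s,s}\ni 1_x$, and note that the automorphisms in that fiber contribute exactly $1$, which is impossible when $F_{s,s}\subseteq G_x$. Your extra checks (that $F_{s,s}=F_{s',s'}$ for the representative $s'\in\mathbb U_x$ with $s=\sigma s'$, and that $k\neq 0$ is needed) are details the paper leaves implicit.
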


\begin{proof}
By Theorem \ref{thm:main}, projectivity of $\Delta_x$ gives a normalized weighting $\epsilon: E_x \to k$. Since $1_x \in F_{s,s}$, balancedness gives
\[
0 = \sum_{f\in F_{s,s}} \epsilon(f).
\]
The total contribution from automorphisms in this fiber is equal to $1$ since $\epsilon$ is normalized. Hence
\[
\sum_{f\in F_{s,s}\cap D_x} \epsilon(f) = -1.
\]
Therefore $F_{s,s} \cap D_x \neq \varnothing$, proving the claim.
\end{proof}

Theorem \ref{thm:main} has a parallel linear algebraic version. Before describing it, let us introduce some notation. Let $W_x$ be the set of white vertices in $\mathcal N_x$, namely the set of normalization fibers. Define the incidence matrix
\[
B_x = (b_{F, f})_{F \in W_x,\ f \in E_x}
\]
by
\[
 b_{F,f}=
 \begin{cases}
 1,& f \in F,\\
 0,& f \notin F.
 \end{cases}
\]
Equivalently, regard $B_x$ as a $k$-linear map
\[
 B_x: k^{E_x} \longrightarrow k^{W_x}, \qquad (B_x \epsilon)(F) = \sum_{f \in F} \epsilon(f).
\]
Then $\Bal_k(\mathcal N_x) = \ker B_x$. Separate the columns according to $E_x = G_x \sqcup D_x$ and write
\[
B_x =
\begin{bmatrix}
B_x^G & B_x^D
\end{bmatrix}.
\]
For $g \in G_x$, let $b_g$ denote the column of $B_x$ indexed by $g$.

\begin{corollary} \label{thm:matrix}
The following are equivalent:
\begin{enumerate}
\item $\Delta_x$ is projective;

\item $b_{1_x} \in \im(B_x^D)$;

\item $\im(B_x^G) \subseteq \im(B_x^D)$.
\end{enumerate}
If $k$ is a field, these are further equivalent to $\operatorname{rank} B_x = \operatorname{rank}B_x^D$.
\end{corollary}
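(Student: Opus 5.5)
The plan is to reduce everything to Theorem~\ref{thm:main} by translating the existence of a normalized weighting into a statement about the column spaces of $B_x^G$ and $B_x^D$. Write a weighting as $\epsilon = (\epsilon_G, \epsilon_D)$ with $\epsilon_G \in k^{G_x}$ and $\epsilon_D \in k^{D_x}$. Balancedness means $B_x\epsilon = 0$, that is,
\[
B_x^G \epsilon_G + B_x^D \epsilon_D = 0 .
\]
A normalized weighting is one with $\epsilon_G = \delta_{1_x}$, so for such $\epsilon$ the equation becomes $b_{1_x} = B_x^D(-\epsilon_D)$. Conversely, if $b_{1_x} = B_x^D v$, then the weighting with $\epsilon_G = \delta_{1_x}$ and $\epsilon_D = -v$ is balanced and normalized. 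By Theorem~\ref{thm:main}, (1)$\Leftrightarrow$(2).

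For (2)$\Leftrightarrow$(3), the implication (3)$\Rightarrow$(2) is trivial, since $b_{1_x}$ is a column of $B_x^G$. For (2)$\Rightarrow$(3), it suffices to show $b_g \in \im(B_x^D)$ for every $g \in G_x$. Following the proof of (2)$\Leftrightarrow$(3) in Theorem~\ref{thm:main}, take the normalized $\epsilon$ and form $\Phi_\epsilon g$. It is still balanced, because $s(\Phi_\epsilon g) = (s\Phi_\epsilon)g = 0$, and its boundary value is $g$, so its $G_x$-part is $\delta_g$. We also need its $D_x$-part to be supported in $D_x$. This holds because right multiplication by $g$ preserves $D_x$: if $fg$ were invertible, then $f = (fg)g^{-1}$ would be invertible. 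Hence the $D_x$-part of $\Phi_\epsilon g$ is a vector $w$ with
\[
b_g + B_x^D w = 0,
\]
and therefore $b_g \in \im(B_x^D)$. This bookkeeping step is the only one that needs any care, and it is straightforward.

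Finally, suppose $k$ is a field. Condition (3) says
\[
\im B_x = \im(B_x^G) + \im(B_x^D) = \im(B_x^D).
\]
Since $\im(B_x^D) \subseteq \im B_x$ always holds, this equality is equivalent to equality of dimensions, that is, $\rank B_x = \rank B_x^D$.
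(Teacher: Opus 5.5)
Your proposal is correct and takes essentially the same route as the paper. You rewrite normalized weightings as solutions of $b_{1_x}+B_x^D\rho=0$ and invoke Theorem~\ref{thm:main}, get (2)$\Rightarrow$(3) by right-translating the normalized weighting by $g\in G_x$, and compare column spaces for the rank criterion over a field. Your extra check that $D_x g\subseteq D_x$, which makes the $G_x$-part of $\Phi_\epsilon g$ exactly $\delta_g$, is a detail the paper leaves implicit.
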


\begin{proof}
A weighting satisfying the prescribed boundary condition has the form
\[
 \epsilon = \delta_{1_x} + \rho, \qquad \rho \in k^{D_x},
\]
where $\delta_{1_x}$ is the delta function at the identity automorphism. It is balanced precisely when
\[
 B_x \epsilon = b_{1_x} + B_x^D d = 0.
\]
Thus such a weighting exists if and only if $b_{1_x} \in \im(B_x^D)$. The equivalence of (1) and (2) then follows by Theorem~\ref{thm:main}.

Clearly, (3) implies (2). Conversely, suppose (2) holds. By Theorem \ref{thm:main}, there is a $\Phi_x \in kE_x$ whose coefficient function is a normalized weighting. For every $g\in G_x$, the corresponding coefficient function of $\Phi_xg$ is balanced and has boundary value $g$. Translating this statement to incidence matrices shows $b_g \in \im(B_x^D)$ for every $g\in G_x$. Hence (3) holds.

If $k$ is a field, condition (3) says exactly that adjoining the automorphism columns to $B_x^D$ does not increase the column rank. Thus it is equivalent to the equality of ranks.
\end{proof}

The following general fact plays a vital role for us to obtain Dold-Kan-type equivalences.

\begin{lemma} \label{lem:projective generators}
Suppose that every standard module $\Delta_x$ is projective. Then the family $\{\Delta_x \mid x \in \Ob(\C)\}$ is a family of small projective generators of $\C \Mod$. More precisely, for every object $x$, the representable module $P_x$ belongs to the additive closure of the standard modules $\Delta_y$ with $d(y) \leqslant d(x)$.
\end{lemma}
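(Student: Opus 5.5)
We argue by induction on the degree $d(x)$, proving the stronger statement that $P_x$ lies in the additive closure $\operatorname{add}\{\Delta_y \mid d(y)\leqslant d(x)\}$. Once this holds, the generator statement is formal. Each $\Delta_y$ is finitely generated (by $[1_y]$) and projective, hence small. The representables $P_x$ form a generating family of $\C\Mod$, and each $P_x$ is a finite direct sum of summands of finite sums of the $\Delta_y$. Hence the $\Delta_y$ generate.

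For the inductive step, fix $x$ and use Theorem~\ref{thm:main}. Since $\Delta_x$ is projective, the quotient $q_x:P_x\to\Delta_x$ splits, so
\[
P_x\cong \Delta_x\oplus \I_x,
\]
and $\I_x$ is a direct summand of $P_x$, hence projective. It therefore suffices to show that $\I_x$ lies in $\operatorname{add}\{\Delta_y\mid d(y)<d(x)\}$. By construction, $\I_x$ is generated by the noninvertible morphisms in $\Cm$ with source $x$. Every such morphism factors as $\sigma s'$ with $s'$ unfactorizable (after writing it as a composite of unfactorizables and absorbing automorphisms) and $s'\in\mathbb U_x$ up to an automorphism. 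So $\I_x$ is generated by the finitely many elements $s\in\mathbb U_x$. This gives an epimorphism
\[
\pi:\bigoplus_{s\in\mathbb U_x,\ s:x\to y} P_y\longrightarrow \I_x,
\]
sending $1_y$ in the summand indexed by $s$ to $s$. Each $y$ here satisfies $d(y)<d(x)$ by axiom (R2). Since $\I_x$ is projective, $\pi$ splits, so $\I_x$ is a direct summand of a finite sum of the $P_y$. By the induction hypothesis, each such $P_y$ lies in $\operatorname{add}\{\Delta_z\mid d(z)\leqslant d(y)\}$, and $\operatorname{add}$ is closed under summands. This closes the induction.

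The base case is $d(x)$ minimal, or more generally any $x$ admitting no noninvertible morphism in $\Cm$ out of it. There $\I_x=0$, so $P_x=\Delta_x$. This is in fact subsumed by the inductive step with an empty sum, so a strong induction on $d(x)\in\mathbb N$ suffices.

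The main point requiring care is the finiteness used in $\pi$. We need $\mathbb U_x$ to be finite; the paper asserts this, using local finiteness together with the fact that only finitely many objects occur in each degree. We also need the surjectivity of $\pi$: every element of $\I_x(z)$ is a combination of morphisms $f$ with $f^-$ noninvertible, and each such $f$ can be written as $g s$ with $s\in\mathbb U_x$, so it lies in the image of $\pi$. Once these are checked, the argument uses only projectivity of $\I_x$ and the degree-lowering property of negative morphisms.
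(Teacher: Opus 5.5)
Your proposal is correct and follows essentially the same route as the paper: induction on $d(x)$, the splitting $P_x\cong\Delta_x\oplus\I_x$, the finite epimorphism $\bigoplus_{s\in\mathbb U_x}P_{t(s)}\to\I_x$ (which splits because $\I_x$ is projective), and the induction hypothesis applied to the codomains $t(s)$, which have strictly smaller degree. The differences are cosmetic. You get smallness of $\Delta_y$ from finite generation plus projectivity rather than from its being a summand of $P_y$. Also, your remark that every noninvertible negative morphism ``factors as $\sigma s'$'' should read $g s$ with $s\in\mathbb U_x$, which is how you correctly state it later.
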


\begin{proof}
We argue by induction on $d(x)$. If $d(x)=0$, then there is no noninvertible morphism in $\Cm$ with source $x$, so $\I_x = 0$ and hence $P_x = \Delta_x$.Suppose that $d(x)>0$ and that the assertion holds for all objects of smaller degree. Since $\Delta_x$ is projective, the canonical quotient map $P_x \twoheadrightarrow \Delta_x$ splits, so $P_x \cong \I_x \oplus \Delta_x$, and in particular $\I_x$ is projective.

Recall that $\I_x$ is generated by the noninvertible morphisms in $\Cm$ with source $x$. Since every such morphism is a composite of unfactorizable morphisms, it is already generated by the representative unfactorizable morphisms $s: x \to y$ in $\mathcal U_x$. Consequently, composition with these morphisms induces an epimorphism
\[
\bigoplus_{s\in\mathcal U_x}P_{t(s)} \longrightarrow \I_x, \qquad g \longmapsto gs,
\]
where $t(s)$ denotes the codomain of $s$. The sum is finite by our local finiteness assumptions, and $d(t(s)) < d(x)$ for every $s\in\mathcal U_x$.

Since $\I_x$ is projective, the above epimorphism splits. Hence $\I_x$ is a direct summand of
\[
\bigoplus_{s \in\mathcal U_x}P_{t(s)}.
\]
By the induction hypothesis, every $P_{t(s)}$ belongs to the additive closure of the standard modules $\Delta_y$ with $d(y) < d(x)$. Therefore the same is true of $\I_x$. Since $P_x \cong \I_x \oplus\Delta_x$, it follows that $P_x$ belongs to the additive closure of the standard modules $\Delta_y$ with $d(y) \leqslant d(x)$. Thus standard modules form a family of projective generators. Moreover, every $\Delta_x$ is small since it is isomorphic to a direct summand of the small projective representable $P_x$.
\end{proof}

\begin{definition}
Let $x$ be an object of $\C$. A \emph{normalization idempotent} for $\Delta_x$ is an element $\Phi \in kE_x$ such that $1_x - \Phi \in kD_x$ and $u \Phi = 0$ for every noninvertible morphism $u$ in $\Cm$ with source $x$.
\end{definition}

The terminology is justified by the following result, which provides a correspondence between normalization idempotents and normalized weightings.

\begin{theorem} \label{thm:normalization-correspondence}
Let $x$ be an object of $\C$. There is a natural bijection between the following sets:
\begin{enumerate}
\item normalized $k$-weightings $\epsilon: E_x \to k$,

\item normalization idempotents $\Phi \in kE_x$.
\end{enumerate}

More explicitly, the correspondence from (1) to (2) is
\[
\epsilon \longmapsto \Phi_{\epsilon} = \sum_{f \in E_x} \epsilon(f)f.
\]
Moreover, under this correspondence, one has
\[
\Phi_{\epsilon}^2 = \Phi_{\epsilon}, \quad P_x \Phi_{\epsilon} \cong \Delta_x, \quad \I_x = P_x(1_x - \Phi_{\epsilon}).
\]
\end{theorem}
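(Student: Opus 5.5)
The plan is to show that the two sets are described by the same conditions on the coefficient function. I would then deduce the three identities from the annihilation property together with the normalization condition.

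\emph{Bijection.} An element $\Phi\in kE_x$ is the same as its coefficient function $\epsilon$, via $\Phi=\Phi_\epsilon$. The condition $1_x-\Phi_\epsilon\in kD_x$ says that the coefficients of $\Phi_\epsilon$ on $G_x$ are $\delta_{1_x}$, that is, $\epsilon(1_x)=1$ and $\epsilon(g)=0$ for $g\in G_x\setminus\{1_x\}$. This is exactly the normalized boundary condition. By Lemma~\ref{lem:balanced-annihilator}, the condition $u\Phi_\epsilon=0$ for all noninvertible $u\in\Cm$ with source $x$ is equivalent to $\epsilon$ being balanced. So $\epsilon\mapsto\Phi_\epsilon$ restricts to a bijection between normalized weightings and normalization idempotents, with inverse given by taking coefficients.

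\emph{Idempotence.} Write $\Phi=\Phi_\epsilon=1_x-\Psi$ with $\Psi\in kD_x$. Then $\Phi^2=\Phi-\Psi\Phi$, so it suffices to show $f\Phi=0$ for every $f\in D_x$. Take the Reedy factorization $f=f^+f^-$. Since $f$ is a noninvertible endomorphism, $f^-$ must be noninvertible: otherwise $f^+$ would be a morphism of $\Cp$ from an object of degree $d(x)$ to $x$, hence invertible by (R2), and then $f$ would be invertible. Hence $f^-\Phi=0$, and therefore $f\Phi=f^+(f^-\Phi)=0$. This gives $\Psi\Phi=0$ and $\Phi^2=\Phi$. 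The same argument shows that $h\Phi=0$ for every $h\in\I_x(y)$ and every $y$, since each such $h$ has noninvertible negative part.

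\emph{Module identities.} Right multiplication by $\Phi$, i.e.\ $R_\Phi$, is an idempotent endomorphism of $P_x$, so $P_x=P_x\Phi\oplus P_x(1_x-\Phi)$. I will show $\I_x=P_x(1_x-\Phi)$ by proving both inclusions.
\begin{itemize}
\item For $\supseteq$: $1_x-\Phi=\Psi\in kD_x=\I_x(x)$, and $\I_x$ is a submodule, so $P_x\Psi\subseteq\I_x$.
\item For $\subseteq$: for $h\in\I_x(y)$ we have $h\Phi=0$ by the previous step, so $h=h(1_x-\Phi)\in P_x(1_x-\Phi)$.
\end{itemize}
It follows that $P_x\Phi\cong P_x/P_x(1_x-\Phi)=P_x/\I_x=\Delta_x$. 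Equivalently, by Theorem~\ref{thm:main} and Proposition~\ref{prop:balanced-hom}, $j_\epsilon$ is a section of $q_x$ with image $P_x\Phi$.

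The main obstacle is the factorization step showing $D_x\Phi=0$. It needs the degree argument above, which uses the axioms (R1)--(R2), to ensure that every noninvertible endomorphism, and more generally every element of $\I_x$, passes through a noninvertible negative morphism. Everything else is bookkeeping with idempotents.
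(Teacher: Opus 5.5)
Your proposal is correct and follows the paper's proof essentially step for step. You get the bijection from Lemma~\ref{lem:balanced-annihilator} plus the boundary condition, idempotence from $(1_x-\Phi)\Phi=0$ by factoring each noninvertible endomorphism through a noninvertible negative morphism, and $\I_x=P_x(1_x-\Phi)$ from the same two inclusions; the only difference is that you justify via (R1)--(R2) why every $f\in D_x$ has noninvertible negative part (that is, $\I_x(x)=kD_x$), which the paper takes as evident from the definitions.
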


\begin{proof}
Let $\epsilon: E_x \to k$ be a normalized weighting. By Lemma \ref{lem:balanced-annihilator}, one has $u \Phi_{\epsilon} = 0$ for every noninvertible morphism $u$ in $\Cm$ with source $x$. Moreover, under the quotient map $q_x: P_x \to \Delta_x$, the image of $\Phi_{\epsilon}$ in $\Delta_x(x) \cong kG_x$ is precisely $[1_x]$. This is equivalent to $1_x - \Phi_{\epsilon} \in kD_x = \I_x(x)$. Thus $\Phi_{\epsilon}$ is a normalization idempotent.

Conversely, given a normalization idempotent
\[
\Phi = \sum_{f \in E_x} c_f f \in kE_x,
\]
it defines a $k$-weighting
\[
c: E_x \longrightarrow k, \quad f \longmapsto c_f.
\]
Since $u \Phi = 0$ for every unfactorizable morphism $u$ in $\Cm$ with source $x$, Lemma \ref{lem:balanced-annihilator} implies that $c$ is balanced. Moreover, since $1_x - \Phi$ lies in $\I_x(x) = kD_x$, it follows that $c$ has boundary value $\delta_{1_x}$, and hence is a normalized weighting. Consequently, (1) and (2) are in bijective correspondence.

We next show the last statement. Let $\Phi$ be such an element. Since $1_x - \Phi$ is contained in $\I_x(x)$, every basis element $f$ occurring in $1_x - \Phi$ is a noninvertible morphism with source $x$, and hence factors as
\[
x \xrightarrow{u} z \xrightarrow{v} x,
\]
where $u$ is a noninvertible morphism in $\Cm$. By Lemma \ref{lem:balanced-annihilator},
\[
f\Phi = (vu)\Phi = v(u\Phi) = 0.
\]
By linearity, we deduce that $(1_x - \Phi) \Phi = 0$, so $\Phi^2 = \Phi$ as desired. Furthermore, the fact that $u \Phi = 0$ for every generator $u$ of $\I_x$ also implies that
\[
\I_x \subseteq P_x(1_x - \Phi).
\]
On the other hand, since $(1_x - \Phi) \in \I_x(x)$, we also have
\[
P_x (1_x - \Phi) \subseteq \I_x.
\]
Consequently, we have
\[
P_x(1_x - \Phi) = \I_x, \quad \Delta_x = P_x /\I_x \cong P_x \Phi
\]
as desired.
\end{proof}

\begin{remark}
Theorem \ref{thm:normalization-correspondence} shows that the normalization problem may be formulated as the linear boundary-value problem on $\mathcal N_x$: one seeks a balanced weighting $\epsilon: E_x \to k$ whose restriction to the boundary $G_x$ is $\delta_{1_x}$. Once such a weighting is found, the associated element
\[
\Phi_{\epsilon} = \sum_{f \in E_x} \epsilon(f) f
\]
is automatically a normalization idempotent. The normalization graph therefore replaces the problem of finding a suitable idempotent by the more intrinsic problem of solving a system of linear fiber-sum equations.
\end{remark}

Once a normalization idempotent exists, its uniqueness is equivalent to the singular ideal having a two-sided identity.

\begin{corollary}\label{cor:normalization-uniqueness}
Let $x$ be an object admitting a normalization idempotent $\Phi_x$. Then the set of normalization idempotents at $x$ is precisely
\[
\Phi_x + \Phi_x (kD_x).
\]
Consequently, the following conditions are equivalent:
\begin{enumerate}
\item The normalization idempotent at $x$ is unique.

\item $\Phi_x (kD_x) = 0$.

\item $e_x = 1_x - \Phi_x$ is a two-sided identity of $kD_x$.
\end{enumerate}
When these conditions hold, $e_x$ and $\Phi_x$ are central in $kE_x$, and we have an algebra decomposition
\[
kE_x \cong kD_x \times kG_x.
\]
\end{corollary}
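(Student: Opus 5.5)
The plan is to describe the set of normalization idempotents as $\Phi_x$ plus the balanced weightings supported on $D_x$, show that this set is exactly $\Phi_x(kD_x)$, and then deduce the equivalences and the decomposition.

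\textbf{Step 1: reduce to balanced elements of $kD_x$.} Let $\Psi$ be another normalization idempotent. Then $\Psi-\Phi_x = (1_x-\Phi_x)-(1_x-\Psi)\in kD_x$, and $u(\Psi-\Phi_x)=0$ for every noninvertible $u$ in $\Cm$ with source $x$. Conversely, if $\Theta\in kD_x$ is annihilated by all such $u$, then $\Phi_x+\Theta$ is again a normalization idempotent. So the normalization idempotents are exactly $\Phi_x+\mathcal A$, where
\[
\mathcal A=\{\Theta\in kD_x \mid u\Theta=0 \text{ for all such } u\}.
\]

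\textbf{Step 2: show $\mathcal A=\Phi_x(kD_x)$.} This is the key step.
\begin{enumerate}
\item[(a)] $\Phi_x(kD_x)\subseteq kD_x$, since $D_x$ is a two-sided ideal of the monoid $E_x$: a composite with a noninvertible endomorphism is noninvertible, because $E_x$ is a finite monoid.
\item[(b)] $u\Phi_x d=0$, since $u\Phi_x=0$. Hence $\Phi_x(kD_x)\subseteq\mathcal A$.
\item[(c)] Conversely, let $\Theta\in\mathcal A$ and write $\Theta=\Phi_x\Theta+(1_x-\Phi_x)\Theta$. Each basis element $f$ of $1_x-\Phi_x$ lies in $D_x$, so it factors as $f=vu$ with $u$ a noninvertible morphism in $\Cm$. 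This is exactly the argument used in the proof of Theorem~\ref{thm:normalization-correspondence}. Therefore $f\Theta=v(u\Theta)=0$, so $(1_x-\Phi_x)\Theta=0$ and $\Theta=\Phi_x\Theta\in\Phi_x(kD_x)$.
\end{enumerate}
Together these give $\mathcal A=\Phi_x(kD_x)$.

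\textbf{Step 3: the equivalences.} (1)$\Leftrightarrow$(2) is immediate from Steps 1 and 2. For (2)$\Rightarrow$(3), let $d\in kD_x$. Then $e_xd=d-\Phi_xd=d$. For the right identity, note that $d(1_x-\Phi_x)=d-d\Phi_x$ and $d\Phi_x=0$: each basis element of $D_x$ factors as $vu$ with $u$ a noninvertible negative morphism, and $u\Phi_x=0$. So $de_x=d$ holds automatically. For (3)$\Rightarrow$(2), if $e_xd=d$ for all $d\in kD_x$, then $\Phi_xd=0$.

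\textbf{Step 4: centrality and the decomposition.} Every element of $kE_x$ is $a+b$ with $a\in kD_x$ and $b\in kG_x$. It suffices to check that $e_x$ commutes with each $g\in G_x$. Both $ge_x$ and $e_xg$ lie in $kD_x$, so using $e_x$ as a two-sided identity of $kD_x$,
\[
ge_x=e_x(ge_x)=(e_xg)e_x=e_xg.
\]
Hence $e_x$, and therefore $\Phi_x=1_x-e_x$, is central. Then
\[
kE_x\cong e_xkE_x\times\Phi_xkE_x .
\]
Here $e_xkE_x=kD_x$, because $e_x\in kD_x$ and $e_x$ is the identity on $kD_x$. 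The second factor $\Phi_xkE_x$ has trivial intersection with $kD_x$, so it maps isomorphically onto $kE_x/kD_x\cong kG_x$, and the projection $kE_x\to kG_x$ is an algebra map.

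The main obstacle is the inclusion $\mathcal A\subseteq\Phi_x(kD_x)$ in Step 2(c). It depends on the factorization of every element of $D_x$ through a noninvertible negative morphism, which follows from (R1) and (R2).
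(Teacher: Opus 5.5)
Your proof is correct and follows essentially the same route as the paper. In both arguments, the key point is that every element of $D_x$ factors through a noninvertible negative morphism, so $(1_x-\Phi_x)\Theta=0$ for any difference $\Theta$ of two normalization idempotents, giving $\Theta=\Phi_x\Theta$; the equivalences then follow from $kD_x\Phi_x=0$. You also spell out two points the paper leaves implicit: that $D_x$ is a two-sided ideal of the finite monoid $E_x$, and the verification of centrality and of the decomposition $kE_x\cong kD_x\times kG_x$.
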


\begin{proof}
Since $\Phi_x$ is a normalization idempotent, we have $1_x-\Phi_x \in kD_x$ and $kD_x \Phi_x = 0$. For every $a\in J_x$, the element $\Phi_x+\Phi_xa$ has the same coefficients on $G_x$ as $\Phi_x$, and satisfies the
normalization equations, since
\[
s(\Phi_x+\Phi_xa)=s\Phi_x+(s\Phi_x)a=0
\]
for every noninvertible negative morphism $s$ with source $x$. Thus it is a normalization idempotent.

Conversely, let $\Psi_x$ be another normalization idempotent. Then
\[
\Psi_x = \Phi_x + (\Psi_x - \Phi_x) = \Phi_x + \Phi_x (\Psi_x - \Phi_x) + (1_x - \Phi_x) (\Psi_x - \Phi_x).
\]
Since $1_x - \Phi_x \in kD_x$, and $kD_x \Psi_x = 0 = kD_x \Phi_x$, it follows that
\[
\Psi_x = \Phi_x + \Phi_x (\Psi_x - \Phi_x) \in \Phi_x + \Phi_x (kD_x)
\]
proving the first statement.

The equivalence of (1) and (2) follows from the above description of normalization idempotents. Since $J_x\Phi_x=0$, the element $e_x=1_x-\Phi_x$ is already a right identity of $J_x$, and it is a left identity precisely when $\Phi_xJ_x = 0$. Thus (2) and (3) are equivalent. The last statement follows immediately.
\end{proof}

The parallel result in terms of linear algebra is:

\begin{corollary}\label{cor:matrix-uniqueness}
The normalization graph $\mathcal N_x$ admits a unique normalized weighting if and only if $b_{1_x} \in \im(B_x^D)$ and $B_x^D$ is injective. In the case that $k$ is a field, these conditions are equivalent to
\[
\operatorname{rank}B_x = \operatorname{rank}B_x^D = |D_x|.
\]
\end{corollary}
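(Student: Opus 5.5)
The plan is to reduce the statement to elementary linear algebra on the incidence matrix, in the same way Corollary~\ref{thm:matrix} was deduced from Theorem~\ref{thm:main}. As in that proof, every weighting whose boundary value is $\delta_{1_x}$ has the form $\epsilon = \delta_{1_x} + \rho$ with $\rho \in k^{D_x}$. Such an $\epsilon$ is balanced, hence normalized, precisely when
\[
B_x^D \rho = -b_{1_x}.
\]
So the set of normalized weightings is in bijection with the solution set of this linear system in $\rho$.

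\textbf{Step 1 (existence).} A solution exists if and only if $-b_{1_x} \in \im(B_x^D)$, which is the same as $b_{1_x} \in \im(B_x^D)$. This is exactly condition (2) of Corollary~\ref{thm:matrix}.

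\textbf{Step 2 (uniqueness).} Assume a solution $\rho_0$ exists. Then the full solution set is the coset $\rho_0 + \ker(B_x^D)$. Hence the normalized weighting is unique if and only if $\ker(B_x^D) = 0$, that is, $B_x^D$ is injective. Combining Steps 1 and 2 gives the first assertion.

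\textbf{Step 3 (field case).} Here the column space of $B_x^D$ spans all of $k^{|D_x|}$ directions, so injectivity of $B_x^D$ is equivalent to $\rank B_x^D = |D_x|$. By Corollary~\ref{thm:matrix}, the condition $b_{1_x} \in \im(B_x^D)$ is equivalent to $\rank B_x = \rank B_x^D$. That corollary uses the right-translation argument of Theorem~\ref{thm:main} to pass from $b_{1_x}$ to all columns $b_g$ with $g \in G_x$. Putting these together yields
\[
\rank B_x = \rank B_x^D = |D_x|.
\]

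The main point requiring care is Step 3. The rank equality $\rank B_x = \rank B_x^D$ alone says every column of $B_x^G$ lies in $\im(B_x^D)$. We need to know that this is equivalent to membership of the single column $b_{1_x}$. One direction is trivial. The converse is precisely the (2)$\Leftrightarrow$(3) part of Corollary~\ref{thm:matrix}, so we simply invoke it.

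If one prefers to avoid that, there is an alternative route via Corollary~\ref{cor:normalization-uniqueness}. Uniqueness there is equivalent to $\Phi_x(kD_x)=0$, but the linear-algebra route above is more direct, so that is the one I would write.
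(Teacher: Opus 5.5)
Your proposal is correct and follows the paper's proof essentially verbatim. You parametrize normalized weightings as $\delta_{1_x}+\rho$ with $B_x^D\rho=-b_{1_x}$, observe that the solution set is a coset of $\ker B_x^D$, and invoke Corollary~\ref{thm:matrix} to identify $b_{1_x}\in\im(B_x^D)$ with $\rank B_x=\rank B_x^D$. One small correction in Step 3: injectivity of $B_x^D$ is equivalent to $\rank B_x^D=|D_x|$ by rank--nullity, since $\dim\ker B_x^D=|D_x|-\rank B_x^D$. It is not because the column space ``spans $k^{|D_x|}$''; that column space lives in $k^{W_x}$.
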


\begin{proof}
A normalized weighting has the form $\epsilon = (\delta_{1_x}, u)$, where $u \in k^{D_x}$ satisfies
\[
B_x^D u=-b_{1_x}.
\]
If $u_0$ is one solution, the complete solution set is $u_0 + \ker B_x^D$. The assertions follow from Corollary \ref{thm:matrix} and, over a field, the rank--nullity theorem.
\end{proof}

Under a left-inverse hypothesis on positive morphisms, uniqueness of normalization idempotents also characterizes when the standard-module construction reduces the representation category to that of the core groupoid.

\begin{theorem}\label{thm:core-groupoid-criterion}
Suppose that every object of $\C$ admits a normalization idempotent. Then the following conditions are equivalent:
\begin{enumerate}
\item Every morphism in $\Cp$ admits a left inverse in $\C$, and every object admits a unique normalization idempotent.

\item Every morphism in $\Cp$ admits a left inverse in $\C$, and every singular ideal $kD_x$ has a two-sided identity.

\item The standard modules are pairwise Hom-orthogonal, namely $\Hom_{k\C}(\Delta_x, \Delta_y) = 0$ when $x \not\cong y$.

\item The functor
\[
\C \Mod \longrightarrow \prod_{x \in \Ob(\C)} kG_x \Mod, \qquad M \longmapsto \bigl(\Hom_{k\C}(\Delta_x, M)\bigr)_{x}
\]
is an equivalence of categories.
\end{enumerate}
\end{theorem}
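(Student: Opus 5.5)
The plan is to prove the cycle $(1)\Leftrightarrow(2)\Rightarrow(3)\Rightarrow(4)\Rightarrow(3)\Rightarrow(2)$, fixing for each $x$ a normalization idempotent $\Phi_x$ with $e_x=1_x-\Phi_x$. The equivalence $(1)\Leftrightarrow(2)$ is immediate from Corollary~\ref{cor:normalization-uniqueness}, since the left-inverse clause is common to both. The main tool for the rest is an explicit description of Hom-spaces between standard modules. By Theorem~\ref{thm:normalization-correspondence}, $\Delta_x\cong P_x\Phi_x$, so Yoneda gives
\[
\Hom_{k\C}(\Delta_x,M)\cong \Phi_x\cdot M(x)
\]
for every module $M$: it is the image of $M(x)$ under the action of $\Phi_x$. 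For $M=\Delta_y$, the module $\Delta_y(x)$ is spanned by the classes of morphisms $y\to x$ with invertible negative part, that is, by positive morphisms up to the automorphisms of $y$. Every $f\in D_x$ acts as zero on $\Delta_x(x)=kG_x$, because $f$ factors through a noninvertible negative morphism. Since $\Phi_x\equiv 1_x$ modulo $kD_x$, this gives $\End_{k\C}(\Delta_x)\cong kG_x$ (or its opposite, depending on conventions).

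For $(2)\Rightarrow(3)$, take $x\not\cong y$ and a positive $f:y\to x$ representing a basis element of $\Delta_y(x)$. Then $f$ is noninvertible, and by hypothesis it has a left inverse $r$ with $rf=1_y$. The endomorphism $fr$ lies in $D_x$, so $e_x(fr)=fr$ because $e_x$ is a two-sided identity of $kD_x$. Hence
\[
e_xf=e_x(fr)f=frf=f,
\]
so $\Phi_xf=0$, and consequently $\Phi_x\cdot\Delta_y(x)=0$. Therefore $\Hom_{k\C}(\Delta_x,\Delta_y)=0$.

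For $(3)\Rightarrow(4)$, Lemma~\ref{lem:projective generators} shows that the $\Delta_x$ form a set of small projective generators. By Morita theory for rings with several objects, $\C\Mod$ is then equivalent to modules over the full $k$-linear category on the $\Delta_x$. Under Hom-orthogonality this category is the disjoint union of the one-object categories $\End(\Delta_x)\cong kG_x$, and the Morita functor is exactly the functor in (4).

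The implications $(4)\Rightarrow(3)$ and $(3)\Rightarrow(2)$ are the main obstacles. For $(4)\Rightarrow(3)$, I would use that the functor in (4) is restriction from modules over the linear category $\mathcal A$ on the $\Delta_x$ to modules over its diagonal subcategory $\prod_x kG_x$. If this restriction is an equivalence, then the simple module concentrated at one object $x$ must come from an $\mathcal A$-module supported at $x$ alone. I would then argue by induction on degree that nonzero off-diagonal Homs contradict fullness on projectives. For $(3)\Rightarrow(2)$, two things must be extracted from $\Phi_x\cdot\Delta_y(x)=0$ for all $y\not\cong x$:
\begin{enumerate}
\item left inverses for noninvertible positive morphisms $f:y\to x$;
\item the identity $\Phi_x(kD_x)=0$.
\end{enumerate}
For the second, I would write each $d\in D_x$ as $d=f^+f^-$ with $f^-$ noninvertible and try to show $\Phi_xf^+f^-=0$ in $kE_x$. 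This needs more than the vanishing of $\Phi_xf^+$ modulo $\I_z(x)$, so I expect to invoke the unique-factorization axiom (R1) carefully and argue inductively on $d(z)$. For the first, the relation $\Phi_xf\in\I_y(x)$, read at the identity coefficient, should produce a negative morphism $x\to y$ whose composite with $f$ is $1_y$. This identity-coefficient extraction is the step I consider least certain.
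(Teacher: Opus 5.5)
Your implications $(1)\Leftrightarrow(2)$, $(2)\Rightarrow(3)$ and $(3)\Rightarrow(4)$ are correct and essentially the paper's. Your step $e_xf=e_x(fr)f=f$ is the paper's $\Phi_xf^+=\Phi_x(f^+g)f^+=0$.

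The gap is that the converse directions, which carry the real content of the theorem, are only announced, and the tools you name are not the ones that close them.

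\textbf{Uniqueness in $(3)\Rightarrow(2)$.} For $\Phi_x(kD_x)=0$, (R1) is not the missing ingredient. What you need is that all standard modules are projective. By the proof of Lemma~\ref{lem:projective generators}, $\I_x$ is a quotient of $\bigoplus_sP_{t(s)}$, and each $P_{t(s)}$ lies in the additive closure of the $\Delta_w$ with $d(w)<d(x)$. Projectivity of $\Delta_x$ together with (3) then gives $\Hom_{k\C}(\Delta_x,\I_x)=0$. Hence the section of $q_x$ is unique, so the normalization idempotent is unique, and Corollary~\ref{cor:normalization-uniqueness} yields $\Phi_x(kD_x)=0$.

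If you prefer your elementwise induction on $d(z)$, the step you are missing is idempotency. Prove $\Phi_x\,k\C(z,x)=0$ for $d(z)<d(x)$ by induction on $d(z)$. Morphisms with noninvertible negative part factor through some $w$ with $d(w)<d(z)$, so the induction hypothesis kills them. For positive $g\colon z\to x$, (3) gives $\Phi_xg\in\I_z(x)$, so $\Phi_xg=\Phi_x(\Phi_xg)=0$.

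\textbf{Left inverses in $(3)\Rightarrow(2)$.} Reading off the coefficient of the basis element $\bar f$ in $\Phi_x\bar f=0$ does not produce a morphism $r$ with $rf=1_y$. Since $\Phi_x=1_x-e_x$ with $e_x\in kD_x$, it only produces some $d\in D_x$ with $df=f$. To finish you would still need an induction on $d(x)-d(y)$:
\begin{enumerate}
\item write $d=d^+d^-$ through $w$ with $d(w)<d(x)$;
\item from $f=d^+(d^-f)$ and (R1), conclude that $d^-f$ is positive;
\item if $r'$ is a left inverse of $d^-f$, then $r'd^-$ is a left inverse of $f$.
\end{enumerate}

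The paper avoids this with a module argument. Suppose a positive $u\colon y\to x$ had no left inverse, and let $N\subseteq\Delta_y$ be the submodule generated by $\bar u\neq0$. Then $N(y)=0$, so $\Hom_{k\C}(\Delta_y,N)\hookrightarrow N(y)=0$. For $z\not\cong y$, $\Hom_{k\C}(\Delta_z,N)\hookrightarrow\Hom_{k\C}(\Delta_z,\Delta_y)=0$. This contradicts the fact that the standard modules generate.

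\textbf{$(4)\Rightarrow(3)$.} Your plan via simple modules, induction on degree and fullness is not yet an argument. The direct route is representability. $\Delta_x$ represents the $x$-component of the functor in (4). So the equivalence carries $\Delta_x$ to the regular module $kG_x$ placed in the $x$-factor, and these are pairwise Hom-orthogonal.
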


\begin{proof}
$(1)\Leftrightarrow(2)$ follows from Corollary \ref{cor:normalization-uniqueness}.

\medskip
\noindent
$(1)\Rightarrow(3)$. Choose the unique normalization idempotent $\Phi_x$ at each object. By Corollary~\ref{cor:normalization-uniqueness},
\[
(kD_x) \Phi_x = 0 = \Phi_x(kD_x).
\]
Using $P_x\Phi_x\cong\Delta_x$ and Yoneda's lemma, we have
\[
\Hom_{k\C}(\Delta_x, \Delta_y) \cong \Hom_{k\C}(P_x \Phi_x, P_y \Phi_y) \cong \Phi_x P_y \Phi_y = \Phi_x\,k\C(y,x)\,\Phi_y.
\]
For $f\in\C(y,x)$ with $x\not\cong y$, choose a Reedy factorization
\[
y\xrightarrow{f^-}z\xrightarrow{f^+}x.
\]
If $f^-$ is noninvertible, then $f^-\Phi_y=0$ by normalization. Otherwise $f^+$ is noninvertible. Choose $g \in \C(x,z)$ with $gf^+ = 1_z$. Then $f^+g$ is noninvertible, and hence belongs to $D_x$, so we have
\[
\Phi_x f^+ = \Phi_x(f^+g)f^+ = 0.
\]
In either case $\Phi_x f \Phi_y = 0$, proving (3).

\medskip
\noindent
$(3)\Rightarrow(1)$.
By the proof of Lemma~\ref{lem:projective generators}, the module $\I_x$ is a quotient of a finite direct sum of $\Delta_y$ with $d(y)<d(x)$. Projectivity and Hom-orthogonality of the standard modules therefore give
\[
\Hom_{k\C}(\Delta_x,\I_x)=0.
\]
Note that any two sections of $q_x: P_x \to \Delta_x$ differ by a morphism $\Delta_x\to\I_x$. Hence the section, and consequently the normalization idempotent, is unique.

It remains to show that every morphism in $\Cp$ has a left inverse. Suppose that $u: y \to x$ is a morphism in $\Cp$ admitting no left inverse. Let $N \subseteq \Delta_y$ be the submodule generated by the nonzero class $\overline{u} \in \Delta_y(x)$. For every $rf\in \C(x,y)$, the composite $ru$ is noninvertible; otherwise $(ru)^{-1}r$ would be a left inverse of $u$. Thus every such composite vanishes in $\Delta_y(y)$, giving $N(y) = 0$.

Choose an arbitrary object $z$. If $z \not \cong y$, then Hom-orthogonality gives
\[
\Hom_{k\C}(\Delta_z,N) \hookrightarrow \Hom_{k\C}(\Delta_z,\Delta_y) = 0;
\]
otherwise, we have
\[
\Hom_{k\C}(\Delta_y, N) \hookrightarrow \Hom_{k\C}(P_y, N) \cong N(y)=0.
\]
In either case, we have
\[
\Hom_{k\C} (\Delta_z, N) = 0.
\]
Since the standard modules generate $\C \Mod$ by Lemma~\ref{lem:projective generators}, this forces $N = 0$, contradicting $\overline u\neq0$. Hence every morphism in $\Cp$ admits a left inverse.

\medskip
\noindent
$(3)\Leftrightarrow(4)$.
The standard modules form a family of small projective generators
by Lemma~\ref{lem:projective generators}, and
\[
\End_{k\C}(\Delta_x) \cong (kG_x)^{\mathrm{op}} \cong kG_x.
\]
Thus (3) implies (4) by Morita theory. Conversely, the equivalence in (4) identifies $\Delta_x$ with the regular module in the $x$-factor, since these objects represent the corresponding component functors. Hence (3) follows.
\end{proof}

\begin{remark}
The preceding theorem is closely related to Street's criterion for when the core groupoid suffices. Street calls a two-sided identity of the singular endomorphism ideal a Kov\'{a}cs idempotent and shows that, under suitable hypotheses, its complement yields an equivalence with representations of the core groupoid; see \cite{St}. When $(\Cm, \Cp)$ is a proper factorization system satisfying his finiteness hypothesis, condition~(1) of our theorem implies Street's Idempotent Axiom, so the implication from (1) to (4) can also be deduced from his result. Our formulation identifies the existence of these singular-ideal identities with uniqueness of normalized weightings and proves the converse: the standard-module equivalence forces both uniqueness of normalization idempotents and the existence of left inverses for positive morphisms.
\end{remark}

\begin{remark}
The left-inverse hypothesis cannot be omitted. For a direct category $\C$ (namely $\C = \Cp$), one has $kD_x = 0$ for every $x \in \Ob(\C)$ since $D_x = \varnothing$. Moreover, the unique normalization idempotent is $1_x$. Nevertheless, the standard modules need not be Hom-orthogonal. For an example, the reader can consider the semisimplex category.
\end{remark}

\section{Examples}\label{sec:examples}

We now apply the normalization-graph criterion to several categories appearing in the motivating examples. Unlike the classical approaches in the literature (see for instance \cite{Kovacs, Kuhn}), which begin with the normalization idempotent and uses it to split the representable functor, we reverse the order as follows:
\[
\text{normalization graph} \Longrightarrow \text{normalized weightings} \Longrightarrow \text{normalization idempotents}.
\]

Most results in this section have been established in the literature or can be deduced from existing theories, so we do not claim originality for them. Our purpose is not to reprove these results for their own sake, but to show that their normalization constructions arise uniformly from the combinatorial framework developed in this paper.

\subsection{The simplex category}\label{subsec:simplex}

Let $\C=\mathrm{OA}$ be the simplex category in the shifted convention $[n] = \{1,\ldots,n\}$. Since $G_n$ is the trivial group for each $n$, the boundary of the normalization graph $\mathcal N_n$ consists only of the identity endomorphism.

For $1 \leqslant i \leqslant n-1$, let $s_i: [n] \twoheadrightarrow [n-1]$ be the surjective morphism in $\Cm$ identifying $i$ and $i+1$. These are all unfactorizable morphisms in $\Cm$ with source $[n]$. Given an order-preserving map $h: [n] \to [n-1]$, the corresponding normalization fiber is
\[
F_{i,h} = \{f \in E_n \mid s_if = h\}.
\]
Since $h$ is order preserving, the fiber $h^{-1}(i)$ is either empty or an interval. Suppose that
\[
h^{-1}(i)=\{a,a+1,\ldots,b\}
\]
has cardinality $m=b-a+1$. Outside this interval, a lift $f:[n] \to [n]$ satisfying $s_if=h$ is uniquely determined by:
\[
f(j)=
\begin{cases}
h(j),&h(j)<i,\\
h(j)+1,&h(j)>i.
\end{cases}
\]
On $h^{-1}(i)$, the value of $f$ must lie in $\{i,i+1\}$. Since $f$ is order preserving, there is a unique integer $0 \leqslant r \leqslant m$ such that $f$ takes the value $i$ on the first $r$ elements of $h^{-1}(i)$ and
the value $i+1$ on the remaining ones. Thus
\[
F_{i,h} = \{f_{i,h}^{(0)},f_{i,h}^{(1)},\ldots,f_{i,h}^{(m)}\}.
\]
The normalization equation for a weighting $\epsilon: E_n \to k$ is:
\begin{equation}\label{eq:oa-full-fiber-equation}
\sum_{r=0}^{m} \epsilon\bigl(f_{i,h}^{(r)}\bigr) = 0.
\end{equation}
Together with the boundary condition $\epsilon(1_n) = 1$, these equations determine the normalization problem.

We now look for a solution supported on a particularly simple family of endomorphisms. For every subset $S\subseteq[n-1]$, define $e_S \in E_n$ by
\[
e_S(j)=
\begin{cases}
j+1,&j\in S,\\
j,&j\notin S.
\end{cases}
\]
Note that $e_ \varnothing = 1_n$ and $e_S$ is noninvertible when $S \neq \varnothing$. Set
\[
T_n=\{e_S\mid S\subseteq[n-1]\}.
\]
We seek a solution of the normalization equations completely supported on $T_n$.

For $S,T\subseteq[n-1]$, one has $s_ie_S = s_ie_T$ if and only if $S\setminus\{i\} = T \setminus\{i\}$. Indeed, changing the membership of $i$ changes only the value of $e_S(i)$ between $i$ and $i+1$, which is invisible after applying $s_i$; membership of every $j\neq i$ remains detectable. Consequently, every normalization fiber meets $T_n$ either trivially or in a pair $\{e_S, e_{S\cup\{i\}}\}$ when $i$ is not contained in $S$. After imposing the condition that $\epsilon$ vanishes outside $T_n$, the nontrivial normalization equations \eqref{eq:oa-full-fiber-equation} reduce to
\begin{equation}\label{eq:oa-boolean-equations}
\epsilon(e_S) + \epsilon(e_{S\cup\{i\}}) = 0, \qquad i \notin S.
\end{equation}
Thus the remaining system is the edge system of the Boolean $(n-1)$-cube. Starting from the boundary value $\epsilon(e_\varnothing) = 1$, equation \eqref{eq:oa-boolean-equations} recursively forces
\[
\epsilon(e_S) = (-1)^{|S|} \qquad (S\subseteq[n-1]).
\]
We have therefore obtained a weighting
\[
\epsilon(f)=
\begin{cases}
(-1)^{|S|}, & f =e_S \text{ for some } S \subseteq [n-1],\\
0, & f \notin T_n.
\end{cases}
\]

It is routine to check that this is a normalized weighting. By Theorem~\ref{thm:normalization-correspondence}, it determines a normalization idempotent
\begin{equation}\label{eq:oa-normalization-from-graph}
\Phi_n = \sum_{S \subseteq[n-1]}(-1)^{|S|}e_S,
\end{equation}
which is precisely the usual simplicial normalization operator. Thus the classical normalization operator is recovered from the alternating balanced weighting on the Boolean cube contained in the normalization graph.

\subsection{Span categories}\label{subsec:spans}

We next consider the span categories of~\cite{LiHom}. Let $\mathscr D$ be a skeletal EI category such that its object poset is artinian, all Hom-sets are finite, every pair of morphisms with common codomain admits a pullback, and every morphism of $\mathscr D$ is monic. Let $\C = \operatorname{Span}(\mathscr D)$ be the category whose morphisms $x\to y$ are isomorphism classes of spans $x \xleftarrow{a} z \xrightarrow{b} y$, with composition defined by pullback. Morphisms in $\Cm$ are represented by
\[
 s_a = [a,1_z]: x \longrightarrow z,
\]
and morphisms in $\Cp$ by spans of the form $[1_z,b]$. For details, see \cite{LiHom}.

For a monomorphism $a: z \to x$ in $\mathscr{D}$, define
\[
 r_a=[1_z,a]: z \longrightarrow x.
\]
Then $r_a$ is a right inverse of $s_a$. Set
\[
 e_a = r_as_a = [a,a] \in E_x,
\]
which is an idempotent in $\C$, and is the diagonal span determined by the subobject $a: z \to x$. Furthermore, given morphisms $a:z\to x$ and $b:w\to x$ in $\mathscr{D}$, the elements $e_a$ and $e_b$ commute. More precisely, if $P = z \times_x w$ is a pullback and $p: P \to x$ is the induced monomorphism, then $e_a e_b = e_b e_a = e_p$. Indeed, composition of the diagonal spans is computed by the pullback of the two maps into $x$. Let $P = z \times_x w$ and
\[
p = a\operatorname{pr}_z = b \operatorname{pr}_w: P \to x
\]
be the projection, then both composites $e_ae_b$ and $e_be_a$ are represented by the same diagonal span $x\xleftarrow{p}P\xrightarrow{p}x$. Hence they are equal to $e_p$.

The following result removes the field hypothesis from \cite[Theorem 4.5]{LiHom}.

\begin{proposition}\label{prop:span-projective}
For every object $x$ of $\C$, the standard module $\Delta_x$ is projective over every commutative coefficient ring $k$. Consequently, if $k\mathscr{D}(x, y)$ is a right projective $kG_x$-module for all $x, y \in \Ob(\C)$, then
\[
\C \Mod \, \simeq \, \prod_{x \in \Ob(\C)} kG_x \Mod.
\]
\end{proposition}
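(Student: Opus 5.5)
We construct a normalization idempotent directly, mimicking the simplicial case, using the commuting diagonal idempotents $e_a$. Fix $x$ and let $A_x$ be the finite set of proper subobjects of $x$: representatives $a: z\to x$ in $\mathscr D$ that are non-invertible, up to isomorphism over $x$. Since the $e_a$ are commuting idempotents in $kE_x$, the element
\[
\Phi_x = \prod_{a \in A_x} (1_x - e_a)
\]
is a well-defined idempotent. It does not depend on the ordering of the product, and it also does not depend on the chosen representatives, because $e_a$ depends only on the subobject. Expanding the product, $\Phi_x = 1_x + \sum_{\varnothing\neq S\subseteq A_x} (-1)^{|S|} e_{p_S}$, where $p_S$ is the pullback (intersection) of the subobjects in $S$. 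Each such $p_S$ is a proper subobject, so $e_{p_S}=r_{p_S}s_{p_S}$ factors through an object of smaller degree and lies in $D_x$. Hence $1_x-\Phi_x\in kD_x$, which is exactly the boundary condition of a normalized weighting.

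By Lemma~\ref{lem:balanced-annihilator} and Theorem~\ref{thm:normalization-correspondence}, it remains to show that $s_a\Phi_x=0$ for every noninvertible negative morphism $s_a=[a,1_z]$ with source $x$, where $a$ is a proper subobject. Since the factors commute, I pull the factor $(1_x-e_a)$ to the front and obtain $s_a\Phi_x = s_a(1_x-e_a)\prod_{b\neq a}(1_x-e_b)$. It then suffices to verify $s_a e_a = s_a r_a s_a = s_a$. This holds because $s_a r_a = 1_z$, as $r_a$ is a right inverse of $s_a$. Therefore $s_a(1_x-e_a)=0$. Thus $\Phi_x$ is a normalization idempotent, and Theorem~\ref{thm:main} gives projectivity of $\Delta_x$ over an arbitrary commutative ring $k$.

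For the second statement, I would apply Theorem~\ref{thm:core-groupoid-criterion}, which requires either Hom-orthogonality of the standard modules, or left inverses for positive morphisms together with uniqueness of normalization idempotents. Left inverses are immediate: $[1_z,b]$ has left inverse $[b,1_z]$, since composing them gives the pullback of $b$ along itself, which is $1_z$ because $b$ is monic. For uniqueness, Corollary~\ref{cor:normalization-uniqueness} requires $\Phi_x(kD_x)=0$. Every $f\in D_x$ factors as $f=f^+f^-$ with the positive part $f^+=[1_w,b]$ for some proper subobject $b$. Then $e_b f^+ = [b,b][1_w,b] = [1_w,b]$, so $(1-e_b)f^+=0$, and hence $\Phi_x f=0$.

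The main obstacle is the hypothesis that $k\mathscr D(x,y)$ is right $kG_x$-projective, which the above argument does not seem to use. The point where I expect it to enter is the case where $f^+$ is invertible but $f^-$ is not. This cannot occur for $f\in D_x$, since an endomorphism with invertible positive part would need a degree drop. So the argument appears to go through regardless, with the projectivity hypothesis presumably relevant only for matching the conclusion with \cite[Theorem 4.5]{LiHom}, and perhaps for identifying $\End(\Delta_x)$ there. If needed, I would instead obtain the equivalence from Hom-orthogonality via the Hom-space description in \cite{LiHom}. In that route the projectivity hypothesis ensures that the relevant $kG$-coinvariants vanish.
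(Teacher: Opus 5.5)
Your proof is correct. For projectivity you follow the paper's argument, with one cosmetic change. You take the product over \emph{all} proper subobjects, not only over the representatives $\mathcal A_x$ of unfactorizable negative morphisms, so you verify condition (iii) of Lemma~\ref{lem:balanced-annihilator} directly instead of (ii). The two products in fact coincide, for instance by the uniqueness you establish afterwards.

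The genuine difference is in the second statement. The paper simply invokes \cite[Theorem~4.5]{LiHom}, the Hom-space computation from the earlier paper, to obtain Hom-orthogonality under the projectivity hypothesis. You instead verify condition~(1) of Theorem~\ref{thm:core-groupoid-criterion} inside the present paper:
\begin{itemize}
\item $s_br_b=1_z$ gives left inverses of positive morphisms;
\item $e_br_b=r_bs_br_b=r_b$ gives $\Phi_x(kD_x)=0$, hence uniqueness by Corollary~\ref{cor:normalization-uniqueness};
\item then (1)$\Rightarrow$(3)$\Rightarrow$(4) yields the equivalence.
\end{itemize}
Your route is self-contained and reuses the explicit idempotent you have just built. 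The paper's route is shorter on the page but depends on the external fiber-graph machinery, which does not need a splitting at all.

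As for your ``main obstacle'': the hypothesis is automatically satisfied here. Recall $G_x\cong\Aut_{\mathscr D}(x)$ via $\sigma\mapsto[1_x,\sigma]$. Since every morphism of $\mathscr D$ is monic, $G_x$ acts freely on $\mathscr D(x,y)$ by precomposition. Hence $k\mathscr D(x,y)$ is always a free right $kG_x$-module. This is why your argument never needs the hypothesis, and the fallback via coinvariants in your last paragraph is unnecessary.
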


\begin{proof}
Fix an object $x$, choose representatives unfactorizable morphisms $s_a = [a,1]: x \to z$ with source $x$, and let $\mathcal A_x$ be the resulting finite set of monomorphisms $a: z \to x$ in $\mathscr{D}$. Define
\begin{equation}\label{eq:span-normalizer}
\Phi_x = \prod_{a \in\mathcal A_x}(1 - e_a).
\end{equation}
Since these factors commute, the order is irrelevant. For every $a\in\mathcal A_x$, since $s_ae_a=s_ar_as_a=s_a$, one has $s_a(1-e_a)=0$, and hence $s_a\Phi_x=0$ for every unfactorizable negative morphism $s_a$. By Lemma \ref{lem:balanced-annihilator}, the coefficient function of $\Phi_x$ is a balanced weighting on the normalization graph.

It remains to check the boundary condition. Expanding ~\eqref{eq:span-normalizer}, the empty product contributes $1_x$. Every nonempty product $e_{a_1} \cdots e_{a_r}$ is the diagonal idempotent associated with the iterated pullback of the proper subobjects $a_1,\ldots,a_r$. The resulting monomorphism into $x$ factors through $a_1$ and therefore cannot be an isomorphism. Hence every nonempty term is a noninvertible endomorphism of $x$. The coefficient function of $\Phi_x$ consequently has boundary value $\delta_{1_x}$, and hence is normalized.

Theorem~\ref{thm:main} now implies that $\Delta_x$ is projective. Moreover, Theorem~\ref{thm:normalization-correspondence} gives $P_x \Phi_x \cong \Delta_x$. Finally, by \cite[Theorem~4.5]{LiHom}, the standard modules are hom-orthogonal under the stated projectivity assumption on the $k\mathscr D(x,y)$, and the asserted Morita equivalence follows.
\end{proof}

\begin{example}
Let $\mathscr D$ be the category of finite sets and injections. Then $\operatorname{Span}(\mathscr D)$ is the category of finite sets and partial injections. For $x=[n]$, the elementary proper subobjects are
\[
[n] \setminus \{i\} \hookrightarrow [n], \qquad 1 \leqslant i \leqslant n.
\]
Let $e_S$ denote the partial identity with domain $S\subseteq[n]$. Then $e_Se_T = e_{S \cap T}$, and Proposition~\ref{prop:span-projective} gives
\[
\Phi_n = \prod_{i=1}^{n} (1-e_{[n]\setminus\{i\}}) = \sum_{T \subseteq[n]} (-1)^{|T|}e_{[n]\setminus T}.
\]
Thus the normalization operator is the M\"{o}bius alternating sum of the Boolean lattice of subsets of $[n]$.
\end{example}

\subsection{The category of finite cyclic groups of squarefree order}

Let $\C$ be the skeletal category whose objects are finite cyclic groups $C_n=\mathbb Z/n\mathbb Z$ with $n$ squarefree, and whose morphisms are group homomorphisms. We equip $\C$ with the epi--mono Reedy structure by taking $d(C_n)=n$ as degree.

For a squarefree integer $n$, let $\mathcal{P}_n$ be the set of prime divisors of $n$. Since
\[
C_n \cong \prod_{p \in \mathcal{P}_n} C_p,
\]
the Chinese remainder theorem gives
\[
E_n = \End(C_n) \cong \mathbb Z/n\mathbb Z \cong \prod_{p \in \mathcal{P}_n}\mathbb F_p
\]
as multiplicative monoids, and
\[
G_n = \Aut(C_n) \cong (\mathbb Z/n\mathbb Z)^\times \cong \prod_{p \in \mathcal{P}_n}\mathbb F_p^\times.
\]
For each $p \in \mathcal P(n)$, let $\pi_p: C_n \twoheadrightarrow C_{n/p}$ be the canonical quotient. Then the maps $\pi_p$ form a complete set of representative unfactorizable morphisms in $\Cm$ with source $C_n$.

We first describe the normalization graph $\mathcal N_n$. Under the above identifications, write an endomorphism of $C_n$ as
\[
a=(a_q)_{q \in \mathcal{P}_n}\in\prod_{q \in \mathcal{P}_n}\mathbb F_q.
\]
For a fixed $p \in \mathcal{P}_n$, composition with $\pi_p$ forgets precisely the $p$-coordinate. Thus, if
\[
b=(b_q)_{q \in \mathcal{P}_n,\ q\neq p} \in \prod_{\substack{q \in \mathcal{P}_n\\q\neq p}}\mathbb F_q,
\]
the corresponding normalization fiber is
\begin{equation}\label{eq:cyclic-normalization-fiber}
F_{p,b}
=
\left\{
(a_q)_{q \in \mathcal{P}_n}
\ \middle|
a_q=b_q\text{ for every }q\neq p
\right\}.
\end{equation}
In particular, $|F_{p,b}|=p$. Consequently, the black vertices of $\mathcal N_n$ form the finite grid
\[
\prod_{p \in \mathcal{P}_n}\mathbb F_p.
\]
Its boundary is
\[
\prod_{p \in \mathcal{P}_n}\mathbb F_p^\times,
\]
and its white vertices correspond to the coordinate lines obtained by fixing every coordinate except one.

We now solve the boundary-value problem. For every subset $S\subseteq\mathcal P(n)$, let $e_S\in E_n$ be the endomorphism whose $p$-coordinate is
\[
(e_S)_p=
\begin{cases}
0,&p\in S,\\
1,&p\notin S.
\end{cases}
\]
Then $e_\varnothing=1_{C_n}$ and $e_Se_T=e_{S\cup T}$. Define $T_n =\{e_S\mid S\subseteq\mathcal P(n)\}$, which is the Boolean cube inside the set of black vertices of the normalization graph.

By the same argument as in Subsection \ref{subsec:simplex}, the alternating weighting on this Boolean cube solves the boundary-value problem. Explicitly, the weighting $\epsilon: E_n \to k$ defined by $\epsilon(e_S)=(-1)^{|S|}$ for $S \subseteq \mathcal{P}_n$ and $\epsilon(f)=0$ for every $f \notin T_n$ is a desired solution. We immediately deduce the following result:

\begin{corollary} \label{cor:squarefree-cyclic-projective}
For every squarefree positive integer $n$, we have a normalization idempotent
\[
\Phi_n = \sum_{S \subseteq \mathcal P_n} (-1)^{|S|}e_S = \prod_{p \in \mathcal{P}_n}(1-e_{{p}}),
\]
such that $P_n\Phi_n \cong \Delta_n$.
\end{corollary}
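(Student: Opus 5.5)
The proof reduces to Theorem~\ref{thm:normalization-correspondence}: it suffices to show that the weighting $\epsilon$ with $\epsilon(e_S)=(-1)^{|S|}$ on $T_n$ and $\epsilon=0$ elsewhere is normalized. The theorem then turns $\Phi_\epsilon$ into a normalization idempotent with $P_n\Phi_n\cong\Delta_n$. The product formula is a formal expansion, which we record first. Since $e_Se_T=e_{S\cup T}$, the elements $e_p:=e_{\{p\}}$ commute and $e_{p_1}\cdots e_{p_r}=e_{\{p_1,\dots,p_r\}}$. Expanding $\prod_{p\in\mathcal P_n}(1-e_p)$ therefore gives $\sum_{S}(-1)^{|S|}e_S$. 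The $e_S$ are pairwise distinct elements of $E_n$, so no collisions occur among coefficients.

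\textbf{Boundary condition.} In the coordinates $\prod_p\mathbb F_p$, the element $e_S$ has $p$-coordinate $0$ for $p\in S$. Hence $e_S\in G_n$ exactly when $S=\varnothing$. So $\epsilon$ restricted to $G_n$ is $\delta_{1}$, since $e_\varnothing=1_{C_n}$.

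\textbf{Balancedness.} By Lemma~\ref{lem:balanced-annihilator}, it is enough to show $\pi_p\Phi_n=0$ for each $p\in\mathcal P_n$, since the $\pi_p$ form the representatives $\mathbb U_{C_n}$. Because the factors commute, we can write $\Phi_n=(1-e_p)\prod_{q\neq p}(1-e_q)$. It therefore suffices to prove $\pi_pe_p=\pi_p$. This holds because $\pi_p$ forgets exactly the $p$-coordinate, and $e_p$ changes only that coordinate (to $0$), leaving all others equal to $1$.

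Alternatively, one can argue fiberwise in the spirit of Subsection~\ref{subsec:simplex}. A fiber $F_{p,b}$ is a coordinate line in direction $p$, and it meets $T_n$ either in nothing or in the pair $\{e_S,e_{S\cup\{p\}}\}$ with $p\notin S$. This pair exists precisely when all $b_q\in\{0,1\}$, and the two alternating signs on it cancel.

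The only point needing care is the identification of $\pi_p\circ a$ with "forget the $p$-coordinate" under the Chinese remainder isomorphism. Concretely, $C_n\to C_{n/p}$ is reduction mod $n/p$, and it corresponds to projection away from the $\mathbb F_p$ factor. This is the step I would verify explicitly; the rest is immediate. Finally, apply Theorem~\ref{thm:normalization-correspondence} to conclude $\Phi_n^2=\Phi_n$ and $P_n\Phi_n\cong\Delta_n$.
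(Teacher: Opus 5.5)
Your proposal is correct and is essentially the paper's argument: you exhibit the alternating weighting on the Boolean cube $T_n$, check that its boundary value is $\delta_{1_{C_n}}$ and that it is balanced, and then apply Theorem~\ref{thm:normalization-correspondence}. The paper checks balancedness fiberwise, exactly as in your ``alternatively'' paragraph (each coordinate line $F_{p,b}$ meets $T_n$ either in nothing or in a pair $\{e_S,e_{S\cup\{p\}}\}$ carrying opposite signs). Your primary check instead uses $\pi_p e_p=\pi_p$, the factorization $\Phi_n=(1-e_p)\prod_{q\neq p}(1-e_q)$, and Lemma~\ref{lem:balanced-annihilator}; this is the mechanism the paper uses for span categories in Proposition~\ref{prop:span-projective}, and both verifications are valid.
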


\begin{remark}\label{rem:squarefree-necessity}
The restriction to cyclic groups of squarefree order is essential for the projectivity phenomenon above. Indeed, consider the larger category of all finite cyclic groups and let
\[
C_{p^r} \twoheadrightarrow C_{p^{r-1}}, \qquad r\geqslant 2,
\]
be the canonical quotient, where $p$ is prime. Identifying $\End(C_{p^r}) \cong \mathbb Z/p^r\mathbb Z$. The normalization fiber containing the identity is
\[
F = \{a \in \mathbb Z/p^r\mathbb Z \mid a \equiv 1\pmod{p^{r-1}}\}.
\]
Every element of $F$ is a unit modulo $p^r$, and hence every element of this fiber is an automorphism of $C_{p^r}$. By Corollary \ref{cor:identity-fiber-obstruction}, the standard module attached to $C_{p^r}$ is therefore not projective.
\end{remark}

We next compute Hom-spaces between these standard modules. Since $\Delta_n \cong P_n \Phi_n$, Yoneda's lemma gives
\[
\Hom_{k\C} (\Delta_n,\Delta_m) \cong \im\left( \Delta_m(\Phi_n): \Delta_m(C_n) \to \Delta_m(C_n) \right).
\]
By definition, $\Delta_m(C_n)$ is the quotient of $k\C(C_m,C_n)$ by the submodule spanned by the noninjective homomorphisms. Hence it has a natural basis
\[
\{\bar{f} \mid f: C_m \hookrightarrow C_n\},
\]
where $\bar f$ denotes the class of an injective homomorphism $f$. Under the above Yoneda identification, the corresponding elements of the Hom-space are represented by the vectors $\Phi_n\bar{f}$.

There are no injections $C_m\hookrightarrow C_n$ unless $m$ divides $n$, so $\Delta_m(C_n)=0$ otherwise. Suppose that $m\mid n$ and $m\neq n$. Choose a prime $p$ dividing $n/m$. The cyclic group $C_n$ has a unique subgroup of order $m$, and under the Chinese remainder decomposition this subgroup has zero $p$-coordinate. Hence the idempotent $e_{{p}}$ acts as the identity on the image of every injection $f: C_m \hookrightarrow C_n$. Therefore $(1-e_{\{p\}}) \bar{f} = 0$. Since $1-e_{{p}}$ is a factor of $\Phi_n$, it follows that $\Phi_n \bar{f} = 0$ for every basis element $\bar f$ of $\Delta_m(C_n)$. Consequently,
\[
\Hom_{k\C} (\Delta_n, \Delta_m) = 0.
\]

We therefore obtain the following result:
\begin{lemma} \label{lem:squarefree-cyclic-hom}
For squarefree integers $m, n$,
\[
\Hom_{k \C} (\Delta_n, \Delta_m) \cong
\begin{cases}
kG_n, & m=n,\\
0, & m \neq n.
\end{cases}
\]
\end{lemma}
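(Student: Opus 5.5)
Via Corollary~\ref{cor:squarefree-cyclic-projective} we have $\Delta_n \cong P_n\Phi_n$ for every squarefree $n$, so Yoneda's lemma gives
\[
\Hom_{k\C}(\Delta_n,\Delta_m) \cong \Phi_n\,\Delta_m(C_n),
\]
the image of the action of $\Phi_n$ on $\Delta_m(C_n)$. As observed above, $\Delta_m(C_n)$ has $k$-basis the classes $\bar f$ of injective homomorphisms $f: C_m \hookrightarrow C_n$. We split into three cases: $m \nmid n$, $m \mid n$ with $m \neq n$, and $m = n$.

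If $m \nmid n$, there are no injections $C_m\hookrightarrow C_n$, so $\Delta_m(C_n)=0$ and the Hom-space vanishes. If $m\mid n$ and $m\neq n$, the argument preceding the lemma applies. Pick a prime $p \mid n/m$. The image of any injection $f$ is the unique subgroup of order $m$, whose $p$-coordinate is zero. Hence $e_{\{p\}} f = f$ as morphisms in $\C$, so $(1-e_{\{p\}})\bar f = 0$. Since the factors of $\Phi_n=\prod_q(1-e_{\{q\}})$ commute, $1-e_{\{p\}}$ can be moved to act first, and $\Phi_n \bar f=0$. So the Hom-space is $0$.

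For $m=n$, the injections $C_n\to C_n$ are exactly the automorphisms, so $\Delta_n(C_n)\cong kG_n$ with basis $\bar g$, $g\in G_n$. We compute $\Phi_n\bar g$. Each nonempty term $e_S$ is noninvertible, so $e_Sg$ is noninvertible and its class in $\Delta_n(C_n)$ vanishes. Thus $\Phi_n\bar g=\bar g$, and $\Phi_n$ acts as the identity on $\Delta_n(C_n)$. This gives $\Hom_{k\C}(\Delta_n,\Delta_n)\cong kG_n$ as $k$-modules. Alternatively, this is the standard identification $\End(\Delta_n)\cong (kG_n)^{\mathrm{op}}\cong kG_n$ used in Theorem~\ref{thm:core-groupoid-criterion}. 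If an algebra isomorphism is intended, the inversion $g\mapsto g^{-1}$ (or commutativity of $G_n$) gives the identification with $kG_n$.

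I expect no real obstacle. The only point needing care is the claim $e_{\{p\}}f=f$, i.e.\ that the order-$m$ subgroup of $C_n$ lies in the kernel of the projection to the $C_p$ factor when $p\nmid m$. This holds because a subgroup of order coprime to $p$ has trivial image in $C_p$.
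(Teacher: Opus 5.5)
Your proposal is correct and follows essentially the same route as the paper: the Yoneda identification $\Hom_{k\C}(\Delta_n,\Delta_m)\cong \Phi_n\Delta_m(C_n)$, vanishing when $m\nmid n$ because there are no injections, and, when $m\mid n$ with $m\neq n$, killing every injection through the factor $1-e_{\{p\}}$ for a prime $p\mid n/m$. You also treat the case $m=n$ explicitly: every nonempty term $e_S g$ is noninvertible, so $\Phi_n$ acts as the identity on $\Delta_n(C_n)\cong kG_n$. The paper leaves that step implicit, so your write-up fills it in.
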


Combining Corollary \ref{cor:squarefree-cyclic-projective} and Lemma \ref{lem:squarefree-cyclic-hom}, we know that standard modules form a hom-orthogonal family of projective generators. Morita theory then gives the
following Dold--Kan--type equivalence.

\begin{proposition}
Let $\C$ be the category of finite cyclic groups of squarefree order. Then one has
\[
\C \Mod \simeq \prod_{\substack{n \geqslant 1\\ n\ {\rm squarefree}}} k(\mathbb Z/n\mathbb Z)^\times \Mod.
\]
\end{proposition}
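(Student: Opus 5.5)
The plan is to invoke Theorem~\ref{thm:core-groupoid-criterion}, direction (3)$\Rightarrow$(4). Its standing hypothesis, that every object admits a normalization idempotent, is supplied by Corollary~\ref{cor:squarefree-cyclic-projective}. For each squarefree $n$ we have $\Phi_n=\prod_{p\in\mathcal P_n}(1-e_p)$, and we first confirm that it is a normalization idempotent in the sense of the definition. Expanding the product, every term other than $1$ is an $e_S$ with $S\neq\varnothing$, which has a zero coordinate and so is not invertible; hence $1-\Phi_n\in kD_n$. Next, $\pi_p e_p=\pi_p$, because $e_p$ only changes the $p$-coordinate, which $\pi_p$ discards. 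It follows that $\pi_p(1-e_p)=0$, and since the factors commute, $\pi_p\Phi_n=0$. Lemma~\ref{lem:balanced-annihilator} then yields $u\Phi_n=0$ for every noninvertible negative $u$. At this stage I would also record the Reedy structure: $\Cm$ consists of surjections, $\Cp$ of injections, and the factorization is the epi--mono factorization. Axiom (R4) holds because a surjection is an epimorphism. Local finiteness is immediate, and there is only one object in each degree.

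Hom-orthogonality comes from Lemma~\ref{lem:squarefree-cyclic-hom}: for $m\neq n$ we have $\Hom_{k\C}(\Delta_n,\Delta_m)=0$. The lemma's argument handles only the case $m\mid n$ with $m\neq n$, where $\Phi_n\bar f=0$; in every other case $\Delta_m(C_n)=0$ and the conclusion is trivial. Since $\C$ is skeletal, distinct $m,n$ give non-isomorphic objects, so condition (3) of Theorem~\ref{thm:core-groupoid-criterion} is satisfied. That theorem then says
\[
M\longmapsto\bigl(\Hom_{k\C}(\Delta_n,M)\bigr)_n
\]
is an equivalence onto $\prod_n kG_n\Mod$. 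Finally, $G_n=\Aut(C_n)\cong(\mathbb Z/n\mathbb Z)^\times$, which gives the displayed product.

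Alternatively, one can argue directly. Lemma~\ref{lem:projective generators} shows that the $\Delta_n$ are small projective generators. We have $\End(\Delta_n)\cong\Phi_n kE_n\Phi_n\cong kG_n$, with abelianness removing any op, and the Hom-spaces between distinct objects vanish. Morita theory for rings with several objects then identifies $\C\Mod$ with modules over the discrete $k$-linear category $\coprod_n kG_n$.

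I expect the main obstacle to be confirming that the Hom computation covers every case, and in particular that $\Phi_n$ annihilates $\Delta_m(C_n)$. This rests on the image of an injection $C_m\hookrightarrow C_n$ having zero $p$-coordinate for each $p\mid n/m$, so that $e_p\circ f=f$. There is a subtlety about the direction of the action: in $\Delta_m(C_n)$ the idempotent $\Phi_n$ acts by postcomposition, so we need $e_p f=f$, not $fe_p$. This holds because the unique subgroup of $C_n$ of order $m$ lies in the kernel of the projection onto $\mathbb F_p$. Everything else is formal once these are checked.
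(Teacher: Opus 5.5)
Your proposal is correct and follows essentially the paper's argument. You obtain projectivity of each $\Delta_n$ from the normalization idempotent $\Phi_n=\prod_{p\in\mathcal P_n}(1-e_p)$ of Corollary~\ref{cor:squarefree-cyclic-projective}, Hom-orthogonality from Lemma~\ref{lem:squarefree-cyclic-hom}, and the equivalence from Morita theory for the resulting small projective generators. Routing the last step through (3)$\Rightarrow$(4) of Theorem~\ref{thm:core-groupoid-criterion} is the same Morita argument in packaged form, and your additional checks are accurate: the Reedy axioms, postcomposition as the action of $\Phi_n$ on $\Delta_m(C_n)$, and $\End(\Delta_n)\cong kG_n$.
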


\begin{remark}
The category $\C$ is an inverse category in the sense of inverse-semigroup theory: every morphism $f$ admits a unique generalized inverse $f^\dagger$ satisfying $ff^\dagger f = f$ and $f^\dagger ff^\dagger = f^\dagger$. The equivalence above is therefore closely related to the general representation theory of inverse categories. Building on the M\"{o}bius-theoretic methods of Steinberg for inverse semigroups \cite{SteinbergMobius, SteinbergMobiusII}, Linckelmann proved that the category algebra of a finite inverse category decomposes as a product of matrix algebras over group algebras \cite{LinckelmannInverse}, and hence yields the corresponding finite version of the above equivalence. More recently, Alves and Velasco study Morita equivalences for inverse categories and use Kan-extension methods to analyze certain infinite cases \cite{AlvesVelasco}. Our argument is different in nature: it derives the equivalence directly from the generalized Reedy structure and the normalization graph, where the relevant M\"{o}bius-type idempotents appear as alternating balanced weightings on Boolean cubes.
\end{remark}

\subsection{The category $\mathrm{VA}_q$} \label{subsec:VAq}

Let $\C=\mathrm{VA}_q$ be the category of finite-dimensional vector spaces over $\mathbb F_q$, and write $V_n = \mathbb F_q^n$. Throughout this subsection, assume that $q$ is invertible in $k$. Our purpose is to reinterpret Kov\'{a}cs' construction in \cite{Kovacs} as a solution of the normalization equations. The projectivity of the standard modules, together with the role of the classical Kov\'{a}cs--Kuhn idempotents \cite{Kovacs,Kuhn} as normalization operators, will then follow from Theorem~\ref{thm:main} and Theorem~\ref{thm:normalization-correspondence}.

A complete set of representative unfactorizable morphisms in $\C^-$ with source $V_n$ is given by the quotient maps $\pi_L: V_n \twoheadrightarrow V_n/L$, where $L$ runs through the one-dimensional subspaces of $V_n$. The black vertices of the normalization graph are
\[
E_n = \End_{\mathbb F_q}(V_n)=M_n(\mathbb F_q),
\]
while its boundary is $G_n = \GL_n(\mathbb F_q)$.

Normalization fibers can be explicitly described as follows. Let $ L\subseteq V_n$ be a line and let $h:V_n \to V_n/L$ be a linear map. If
\[
F_{L,h} = \{A\in M_n(\mathbb F_q)\mid \pi_LA=h\}
\]
is nonempty and $A_0\in F_{L,h}$, then
\[
F_{L,h} = A_0+\Hom_{\mathbb F_q}(V_n,L).
\]
Indeed, for $A\in M_n(\mathbb F_q)$, one has $\pi_L A = \pi_LA_0$ if and only if $\im(A-A_0) \subseteq L$, and if and only if $A - A_0$ is contained in $\Hom_{\mathbb F_q}(V_n, L)$.

We seek a normalized weighting $\epsilon:M_n(\mathbb F_q)\to k$, or equivalently, the corresponding normalization idempotent
\[
\Phi_n = \sum_{A \in M_n (\mathbb F_q)}\epsilon(A)[A]
\]
such that $\pi_L \Phi_n = 0$ for every line $L \subseteq V_n$. Since the prescribed boundary coefficients of $\Phi_n$ are $1$ at the identity and $0$ at every other invertible matrix, it is natural to write $\Phi_n = 1-e_n$, where $e_n$ is supported on the singular matrices. Thus the problem becomes that of finding a solution $e_n \in k[\Sing_n(\mathbb F_q)]$ for the following system of linear equations:
\begin{equation}\label{eq:vaq-pi-e}
\pi_Le_n=\pi_L \qquad \text{for every line }L.
\end{equation}

Kov\'{a}cs' theorem \cite{Kovacs} provides an identity element $e_n \in k[\Sing_n(\mathbb F_q)]$ for the ideal spanned by the singular matrices. We claim that $e_n$ solves the system \eqref{eq:vaq-pi-e}. Indeed, fix a line $L\subseteq V_n$ and choose a linear section $j_L: V_n/L \to V_n$ of the quotient map $\pi_L$. Set $p_L = j_L\pi_L \in \End_{\mathbb F_q}(V_n)$. Then $p_L$ is singular and $\pi_L p_L = \pi_L$. Since $e_n$ is the identity of the singular matrix ideal, one has $p_L e_n = p_L$. Consequently,
\[
\pi_L e_n = \pi_Lp_Le_n = \pi_Lp_L = \pi_L.
\]
Thus $e_n$ satisfies $\pi_Le_n=\pi_L$ for every line $L\subseteq V_n$, which is precisely the system of normalization equations above. Therefore $\Phi_n = 1 - e_n$ satisfies $\pi_L\Phi_n=0$ for every $L$. Since $e_n$ is supported on the singular matrices, the coefficient function of $\Phi_n$ has boundary value $\delta_{1_{V_n}}$. Hence $\Phi_n$ corresponds to a normalized weighting on the normalization graph. By Theorems \ref{thm:main} and \ref{thm:normalization-correspondence}, the standard module $\Delta_n$ is projective and $P_n \Phi_n \cong \Delta_n$.

From this viewpoint, Kov'{a}cs' identity for the singular matrix ideal provides directly a solution of the normalization boundary-value problem, while the complementary idempotent $1-e_n$ used by Kuhn is precisely the associated normalization idempotent.

\subsection{Segal's category $\Gamma$}

Let $\Gamma$ be the category of finite pointed sets and basepoint-preserving maps. In \cite{Pirashvili}, Pirashvili defines the normalized part of a $\Gamma$-module by intersecting the kernels of the retractions $r_i$, and uses the associated commuting projections together with decompositions indexed by subsets of $[n]$. Our purpose here is to reinterpret this classical normalization in terms of the boundary value problem of the normalization graph. From this viewpoint, the subset decomposition becomes a Boolean subgraph of the normalization graph, and the usual inclusion--exclusion normalization is precisely its alternating balanced weighting.

Let us briefly recall the basic ingredients of the normalization appearing explicitly in Pirashvili's proof of the Dold--Kan--type theorem. For $1 \leqslant i \leqslant n$, let $\mathbf n_+ = \{0,1,\ldots,n\}$ with basepoint $0$, $\mathbf n_+^{(i)} = \mathbf n_+\setminus\{i\}$ with the inherited basepoint, and define $r_i: \mathbf n_+ \to \mathbf n_+^{(i)}$ by sending $i$ to $0$ and fixing every other element. Let $s_i: \mathbf n_+^{(i)} \to \mathbf n_+$ be the evident section, and $e_i=s_ir_i$ be the idempotent endomorphism in $E_n = \End_\Gamma(\mathbf n_+)$.

Let $J_n\subseteq P_n$ be the subfunctor generated by $r_1,\ldots,r_n$. Equivalently,
\[
J_n(X_+) = k \{f: \mathbf n_+ \to X_+ \mid f(i) = 0 \text{ for some } 1 \leqslant i \leqslant n\}.
\]
Define $\Delta_n^\Gamma = P_n/J_n$. Then
\begin{equation}\label{eq:gamma-standard-values}
\Delta_n^\Gamma(X_+) = k\{f:\{1,\ldots,n\}\to X_+\setminus\{0\}\}.
\end{equation}
Let $t^*(X_+)=k[X_+]/k[0]$ be the reduced linearization functor. Then the basis in \eqref{eq:gamma-standard-values} yields a canonical isomorphism $\Delta_n^\Gamma\cong(t^*)^{\otimes n}$, which are precisely the projective generators used by Pirashvili.

Note that the above projective generators are not the standard modules arising from the usual epi--mono Reedy structure on finite pointed sets. Thus we need to introduce the following variant of Theorem~\ref{thm:main} which applies to this slightly more general setting.

\begin{lemma}\label{lem:relative-normalization}
Let $x$ be an object of a small category $\C$. Suppose that a subfunctor $J \subseteq k\C(x, -)$ is generated by a family of morphisms $u_\alpha: x \to x_\alpha$, and $J(x)$ is the free $k$-submodule spanned by a subset $D \subseteq E_x = \End_\C(x)$. Then $k\C(x, -)/J$ is projective if and only if there exists a weighting $\epsilon: E_x \to k$
such that
\[
\epsilon(1_x) = 1, \qquad \epsilon(f)=0, \quad \forall \, f \in E_x \setminus (D \cup \{ 1_x\})
\]
and
\[
\sum_{\substack{g \in E_x\\u_\alpha g=h}}\epsilon(g)=0
\]
for every $\alpha$ and every morphism $h$ occurring as a value of $u_\alpha g$.
\end{lemma}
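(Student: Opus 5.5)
The plan is to repeat the proof of Theorem~\ref{thm:main} and Proposition~\ref{prop:balanced-hom}, but with $P=k\C(x,-)$, $J$ and $D$ in place of $P_x$, $\I_x$ and $D_x$. Write $q: P\to P/J$ for the quotient map. By Yoneda, natural endomorphisms of $P$ correspond to elements $\Phi\in kE_x$, with $R_\Phi(f)=f\Phi$.

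\emph{Step 1: maps out of the quotient.} An endomorphism $R_\Phi$ factors through $q$ if and only if it kills $J$. Since $J$ is generated by the $u_\alpha$, this is equivalent to $R_\Phi(u_\alpha)=u_\alpha\Phi=0$ for every $\alpha$: the set of $f$ with $f\Phi=0$ is a subfunctor, so it contains $J$ as soon as it contains the generators. Writing $\Phi=\sum_g\epsilon(g)g$ and grouping terms by the value $h=u_\alpha g$ (as in Lemma~\ref{lem:balanced-annihilator}) shows that $u_\alpha\Phi=0$ is exactly the family of fiber-sum equations in the statement. Because $q$ is epi, this gives a bijection between such weightings and $\Hom(P/J,P)$.

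\emph{Step 2: the boundary condition.} The hypothesis that $J(x)$ is free on $D$ gives $(P/J)(x)=kE_x/kD$, which is free on the classes of $E_x\setminus D$. Note that $1_x\notin D$ whenever $J\neq P$. Indeed, if $1_x\in J(x)$ then $J=P$, the quotient is $0$ and trivially projective; this degenerate case needs a separate remark, or is simply excluded. Now suppose $P/J$ is projective. Then $q$ has a section $j$. Let $\Phi=j([1_x])$, so that $jq=R_\Phi$, and $\epsilon$ satisfies the fiber equations by Step 1. The identity $q(\Phi)=[1_x]$ in $kE_x/kD$ says that the coefficients of $\Phi$ on $E_x\setminus D$ are $\delta_{1_x}$. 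These are precisely the stated conditions $\epsilon(1_x)=1$ and $\epsilon(f)=0$ for $f\notin D\cup\{1_x\}$. Conversely, given such an $\epsilon$, Step 1 produces $j$ with $jq=R_{\Phi_\epsilon}$. Then $qj([1_x])=q(\Phi_\epsilon)=[1_x]$, because $\Phi_\epsilon-1_x\in kD=J(x)$. Since $P/J$ is generated by $[1_x]$, we get $qj=\id$, so $q$ splits and $P/J$ is a direct summand of the projective $P$.

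\emph{Main obstacle.} The only point requiring care is the identification of $(P/J)(x)$. It depends on the hypothesis that $J(x)$ is exactly the free span of the subset $D$, rather than some submodule of $kE_x$; this is what allows the condition $q(\Phi)=[1_x]$ to be read off coefficientwise. No Reedy structure, unfactorizability or automorphism argument is needed. The role of the boundary $G_x$ is now played by the complement $E_x\setminus D$, and the generating set $\{u_\alpha\}$ replaces $\mathbb U_x$ directly.
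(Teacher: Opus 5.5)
Your proposal is correct and follows the paper's proof essentially step for step. You factor $R_{\Phi}$ through $P/J$ via the fiber equations $u_\alpha\Phi=0$, read the boundary condition as $q(\Phi)=[1_x]$ in $kE_x/kD$, and recover $\Phi$ by evaluating a section at $[1_x]$.

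Your caveat about $1_x\in D$ actually improves on the paper's sketch, and it should be an exclusion rather than a side remark. In that case $J=P$, and the fiber equations force $\Phi=1_x\Phi\in J(x)\Phi=0$. So for $k\neq 0$ the ``only if'' direction genuinely fails, and the lemma needs the hypothesis $1_x\notin D$, which does hold in the $\Gamma$ application.
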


\begin{proof}
This can be proved by an argument similar to that of Theorem \ref{thm:main}. Given such a weighting $\epsilon$, define
\[
\Phi = \sum_{g \in E_x} \epsilon(g)g.
\]
The fiber equations are equivalent to $u_\alpha\Phi=0$ for every generator $u_\alpha$. Hence the endomorphism of $P_x = k\C(x, -)$ defined by precomposition with $\Phi$ annihilates $J$, and therefore factors through $P_x/J$. The boundary condition says precisely that the image of $\Phi$ in $(P_x/J)(x)$ is the class of $1_x$. Thus the induced map $P_x/J \to P_x$ is a section of the quotient map. Conversely, evaluating a section on the
class of $1_x$ produces such an element $\Phi$ and therefore such a weighting.
\end{proof}

For the quotient $\Delta_n^\Gamma = P_n/J_n$, the subset $D$ consists of those endomorphisms $f: \mathbf n_+ \to \mathbf n_+$ for which $f(i)=0$ for at least one $i\neq0$. Thus the surviving boundary is
\[
B_n = \{f \in E_n \mid f^{-1}(0)=\{0\}\}.
\]
For every subset $S \subseteq[n] = \{1,\ldots,n\}$, define $e_S \in E_n$ by
\[
e_S(j)=
\begin{cases}
0,&j\in S,\\
j,&j\notin S.
\end{cases}
\]
In particular, set $e_\varnothing = 1_{\mathbf n_+}$ and $e_i = e_{\{i\}}$. It is easy to check that $e_Se_T = e_{S\cup T}$. Moreover, by a similar argument in Subsection \ref{subsec:simplex}, one can show that
\[
r_ie_S = r_ie_T \, \Longleftrightarrow \, S \setminus\{i\} = T \setminus\{i\}.
\]
Consequently, every fiber of $(r_i)^{\ast}: E_n \to \Gamma(\mathbf n_+,\mathbf n_+^{(i)}) $ meets the set $\{e_S \mid S\subseteq[n]\}$ either trivially or in exactly two elements $\{e_S, e_{S\cup\{i\}}\}$ when $i$ is not contained in $S$. Thus the part of the normalization graph supported on the vertices $e_S$ is the Boolean $n$-cube. Its alternating weighting gives the desired normalization. Explicitly, the weighting $\epsilon: E_n \to k$ defined by $\epsilon(e_S) = (-1)^{|S|}$ for $S \subseteq [n]$ and $\epsilon(f) = 0$ for every endomorphism $f$ not of the form $e_S$ is normalized, and the associated normalization idempotent is
\[
\Phi_n = \sum_{S\subseteq[n]}(-1)^{|S|}e_S = \prod_{i=1}^n(1-e_i).
\]
By Lemma \ref{lem:relative-normalization}, $\Delta_n^\Gamma \cong(t^*)^{\otimes n}$ is projective, recovering Pirashvili's family of normalized projective generators.

\section{Applications}\label{sec:affine}

We now turn to a new family of applications of the normalization machinery, beginning with the category $\C = \AF_q$ of finite-dimensional affine spaces over $\mathbb F_q$ and all affine maps. The affine case leads not only to a new Dold--Kan--type equivalence, but also to a transfer principle for normalization idempotents, which in turn yields further equivalences for semilinear and semiaffine categories. To the best of our knowledge, these Dold--Kan--type equivalences have not previously appeared in the literature.

\subsection{The affine category}

Equip $\C = \AF_q$ with the usual epi--mono generalized Reedy structure, with degree given by dimension. For each $n \geqslant 0$, let $\mathbb A_n$ be the affine space underlying $V_n = \mathbb F_q^n$. Thus
\[
G_n=\Aut_{\C}(\mathbb A_n)=\AGL_n(q)
\]
and every affine endomorphism of $\mathbb A_n$ can be written uniquely as
\[
[A,b]: x \longmapsto Ax+b, \qquad A\in M_n(\mathbb F_q),\quad b\in V_n.
\]
Accordingly,
\[
E_n = \End_{\C}(\mathbb A_n) = M_n(\mathbb F_q) \ltimes V_n.
\]

As in the linear case, representatives of the unfactorizable morphisms in $\Cm$ with source $\mathbb A_n$ may be chosen to be the quotient maps $\pi_L: \mathbb A_n \twoheadrightarrow \mathbb A_n/L$, where $L$ runs through the one-dimensional linear subspaces of $V_n$. Indeed, every affine surjection becomes linear after postcomposition with a suitable translation of its codomain, and a linear surjection with one-dimensional kernel $L$ differs from $\pi_L$ only by an isomorphism of the codomain.

Fix a line $L\subseteq V_n$ and let $\pi_L: \mathbb A_n \twoheadrightarrow\mathbb A_n/L$ be the corresponding unfactorizable morphism in $\C^-$. For an affine map $h = [B,c]: \mathbb A_n \longrightarrow\mathbb A_n/L$, the corresponding normalization fiber is
\[
F_{L,h} = \{[A,b] \in E_n \mid \pi_L A = B,\ \pi_L b=c\}.
\]
Note that every affine map $h$ occurs in this way since $\pi_L$ is surjective. If $[A_0,b_0] \in F_{L,h}$, then
\[
F_{L,h} = \left\{ [A_0 + T, b_0 + \ell] \;\middle|\; T\in\Hom_{\mathbb F_q}(V_n,L),\ \ell\in L \right\}.
\]
Thus the normalization graph for $\mathbb A_n$ is obtained from the linear normalization fibers by adjoining one translation coordinate. In particular, $|F_{L,h}| = q^{n+1}$

Assume from now on that $q$ is invertible in $k$. Let $\epsilon_{\mathrm{lin}}: M_n(\mathbb F_q) \to k$ be the normalized weighting for $\mathrm{VA}_q$ constructed in Subsection~\ref{subsec:VAq}, and write
\[
\Phi_n^{\mathrm{lin}} = \sum_{A\in M_n(\mathbb F_q)} \epsilon_{\mathrm{lin}}(A)[A].
\]
We extend this weighting to affine endomorphisms by setting
\begin{equation}\label{eq:affine-weighting}
\epsilon_{\mathrm{aff}}([A,b])
=
\begin{cases}
\epsilon_{\mathrm{lin}}(A),&b=0,\\
0,&b\neq0.
\end{cases}
\end{equation}

\begin{proposition}\label{prop:affine-projective}
The weighting $\epsilon_{\mathrm{aff}}$ is normalized on the normalization graph of $\mathbb A_n$. Consequently,
\[
\widetilde\Phi_n = \sum_{A\in M_n(\mathbb F_q)} \epsilon_{\mathrm{lin}}(A)[A,0]
\]
is a normalization idempotent satisfying $P_{\mathbb A_n}\widetilde\Phi_n \cong \Delta_{\mathbb A_n}$.
\end{proposition}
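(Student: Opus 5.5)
We need to check two things: that $\epsilon_{\mathrm{aff}}$ is balanced on every normalization fiber of $\mathcal N_{\mathbb A_n}$, and that its boundary value is $\delta_{1}$. Once both hold, Theorem~\ref{thm:normalization-correspondence} gives that $\widetilde\Phi_n=\Phi_{\epsilon_{\mathrm{aff}}}$ is a normalization idempotent with $P_{\mathbb A_n}\widetilde\Phi_n\cong\Delta_{\mathbb A_n}$.

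\emph{Boundary condition.} An affine automorphism $[A,b]$ has $A\in\GL_n(\mathbb F_q)$. If $b\neq 0$, the weight is $0$ by definition. If $b=0$, the weight is $\epsilon_{\mathrm{lin}}(A)$, which is $1$ for $A=1$ and $0$ for the other invertible matrices, because $\epsilon_{\mathrm{lin}}$ is normalized for $\mathrm{VA}_q$. So the restriction of $\epsilon_{\mathrm{aff}}$ to $G_n=\AGL_n(q)$ is $\delta_{1}$.

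\emph{Balancedness.} By Lemma~\ref{lem:balanced-annihilator} it suffices to show $\pi_L\widetilde\Phi_n=0$ for every line $L$. The plan is to compute directly inside $k\C(\mathbb A_n,\mathbb A_n/L)$:
\[
\pi_L\widetilde\Phi_n=\sum_{A}\epsilon_{\mathrm{lin}}(A)\,\pi_L[A,0]=\sum_A\epsilon_{\mathrm{lin}}(A)\,[\pi_LA,0].
\]
The key observation is that the assignment $B\mapsto [B,0]$ from linear maps $V_n\to V_n/L$ to affine maps is injective. Hence the coefficients of this sum coincide with those of $\pi_L\Phi_n^{\mathrm{lin}}$ in $k\Hom_{\mathbb F_q}(V_n,V_n/L)$, transported along this injection. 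That sum vanishes by the linear normalization equations \eqref{eq:vaq-pi-e}.

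Equivalently, one can argue fiber by fiber. A fiber $F_{L,h}$ with $h=[B,c]$ has the form $\{[A,b]: \pi_LA=B,\ \pi_Lb=c\}$. Only elements with $b=0$ carry nonzero weight. If $c\neq0$, no element of the fiber has $b=0$, so the fiber sum is $0$ trivially. If $c=0$, the fiber sum is $\sum_{\pi_LA=B}\epsilon_{\mathrm{lin}}(A)$, which is a linear fiber sum for $\mathrm{VA}_q$ and so vanishes.

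The only mildly delicate point is to confirm that the representatives $\pi_L$ form the full set $\mathbb U_{\mathbb A_n}$ of representative unfactorizable morphisms. This was argued just before the statement: translations and codomain isomorphisms are absorbed into the $G_y$-orbits. Given that, there is no real obstacle, and the proof reduces entirely to the linear case.
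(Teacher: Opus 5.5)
Your proof is correct and follows the paper's argument: the paper checks balancedness fiber by fiber with exactly your two cases $c\neq 0$ and $c=0$, checks the boundary value on $\AGL_n(q)$ as you do, and then invokes Theorem~\ref{thm:normalization-correspondence}. Your primary computation $\pi_L\widetilde\Phi_n=0$ is the same check phrased via Lemma~\ref{lem:balanced-annihilator}, essentially the transfer-principle proof in Example~\ref{examples}(1); note that injectivity of $B\mapsto[B,0]$ is not actually needed there, since any $k$-linear map sends $\pi_L\Phi_n^{\mathrm{lin}}=0$ to $0$.
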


\begin{proof}
Consider a normalization fiber $F_{L,h}$ with $h=[B,c]$. We have two cases:
\begin{itemize}
\item If $c \neq 0$, then no element of $F_{L,h}$ having nonzero weight under \eqref{eq:affine-weighting} can occur, since every element in the support of $\epsilon_{\mathrm{aff}}$ has translation part zero. Hence
\[
\sum_{f\in F_{L,h}}\epsilon_{\mathrm{aff}}(f)=0.
\]

\item If $c = 0$, then the intersection of $F_{L,h}$ with the support of $\epsilon_{\mathrm{aff}}$ is $\{[A,0] \mid \pi_LA=B\}$. In this case,
\[
\sum_{f \in F_{L,h}}\epsilon_{\mathrm{aff}}(f) = \sum_{\substack{A\in M_n(\mathbb F_q)\\\pi_LA=B}} \epsilon_{\mathrm{lin}}(A) = 0,
\]
where the last equality is precisely the corresponding linear normalization equation for $\VA_q$.
\end{itemize}
Thus $\epsilon_{\mathrm{aff}}$ is balanced.

Note that an affine endomorphism $[A,b]$ is invertible precisely when $A \in \GL_n(\mathbb F_q)$. By our construction, $\epsilon_{\mathrm{aff}}([A,b]) = 0$ whenever $b \neq0$, while on the zero-translation subgroup $\GL_n(\mathbb F_q)$ it restricts to $\epsilon_{\mathrm{lin}}$. Thus $\epsilon_{\mathrm{aff}}(1_{\mathbb A_n}) = 1$ and it vanishes on every other element of $\AGL_n(q)$.

We have shown that $\epsilon_{\mathrm{aff}}$ is a normalized weighting, so the conclusions follow from Theorem \ref{thm:normalization-correspondence}.
\end{proof}

In other words, the linear slice
\[
M_n(\mathbb F_q) \cong \{[A,0]\mid A\in M_n(\mathbb F_q)\} \subseteq E_n
\]
already supports a solution of the affine boundary-value problem. Every affine normalization fiber either misses this slice or intersects it in exactly the corresponding normalization fiber for $\mathrm{VA}_q$. Thus the Kov\'{a}cs--Kuhn normalization for linear maps extends directly to affine maps by assigning coefficient zero to every endomorphism with nonzero translation part.

Now we compute Hom-spaces between normalized projectives.

\begin{proposition}\label{prop:affine-hom}
Assume that $q$ is invertible in $k$. For $m, n \geqslant0$, one has
\[
\Hom_{k\C}(\Delta_n,\Delta_m)
\cong
\begin{cases}
k\AGL_n(q), & m = n,\\
kX_n, & m = n-1,\\
0, & \text{otherwise},
\end{cases}
\]
where $X_n = \left\{ f: \mathbb A_{n-1} \hookrightarrow \mathbb A_n \ \middle|\ 0 \notin\im(f) \right\}$. In particular, $|X_n| = |\GL_n(\mathbb F_q)|$.
\end{proposition}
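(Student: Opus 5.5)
We will compute these Hom-spaces by Yoneda's lemma, using the normalization idempotent of Proposition~\ref{prop:affine-projective}, exactly as in the squarefree cyclic case. Since $\Delta_n \cong P_{\mathbb A_n}\widetilde\Phi_n$, we get
\[
\Hom_{k\C}(\Delta_n,\Delta_m) \cong \widetilde\Phi_n\,\Delta_m(\mathbb A_n),
\]
where $\widetilde\Phi_n$ acts on $\Delta_m(\mathbb A_n)$ by postcomposition. By definition, $\Delta_m(\mathbb A_n)$ is free on the classes $\bar f$ of injective affine maps $f:\mathbb A_m\hookrightarrow\mathbb A_n$; a composite $[A,0]f$ vanishes in $\Delta_m$ unless it is injective. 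We write $\widetilde\Phi_n = 1 - \widetilde e_n$ with $\widetilde e_n = \sum_{A\ \mathrm{singular}} e_n(A)[A,0]$, where $e_n$ is Kov\'acs' two-sided identity of $k[\Sing_n(\mathbb F_q)]$ from Subsection~\ref{subsec:VAq}. We then split into cases on $m$.

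\emph{Case $m>n$.} There are no injections $\mathbb A_m\hookrightarrow\mathbb A_n$, so $\Delta_m(\mathbb A_n)=0$ and the Hom-space vanishes.

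\emph{Case $m=n$.} Here $\Delta_n(\mathbb A_n)\cong k\AGL_n(q)$. For singular $A$ and $g\in\AGL_n(q)$, the composite $[A,0]g$ is noninjective and therefore vanishes in $\Delta_n$. Hence $\widetilde\Phi_n$ acts as the identity, and the Hom-space is $k\AGL_n(q)$.

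\emph{Case $m<n$.} The key observation is the following. Suppose the linear span of $\im(f)$ is a proper subspace $U\subsetneq V_n$. Choose a singular linear projection $p$ onto a hyperplane containing $U$; then $pf=f$. Since $e_n$ is a two-sided identity of the singular ideal, $e_np=p$, so
\[
\widetilde e_n f = [e_n,0]\,p f = pf = f.
\]
Thus $\widetilde\Phi_n\bar f=0$. This occurs whenever $m\leqslant n-2$, so those Hom-spaces vanish. It also occurs when $m=n-1$ and $0\in\im(f)$, since then $\im(f)$ is a linear hyperplane.

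\emph{Case $m=n-1$.} Let $Y\subseteq\Delta_{n-1}(\mathbb A_n)$ be the span of the classes of embeddings whose image contains $0$. The quotient $Q=\Delta_{n-1}(\mathbb A_n)/Y$ is free on $X_n$. By the observation above, $\widetilde\Phi_n Y=0$. Now take $x\in X_n$ and a singular $A$ with $[A,0]x$ injective. Then $A$ has rank $n-1$, so $A(\im x)$ is an $(n-1)$-dimensional affine subspace of the linear hyperplane $\im A$. Hence it equals $\im A$ and contains $0$, which gives $\widetilde e_n x\in Y$, that is, $\widetilde\Phi_n\bar x\equiv\bar x \pmod Y$. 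Consider the composite
\[
\widetilde\Phi_n\Delta_{n-1}(\mathbb A_n)\longrightarrow Q .
\]
It is surjective by the congruence just established. It is injective because if $v=\widetilde\Phi_n w\in Y$, then $v=\widetilde\Phi_n v=0$, using idempotency. Hence the Hom-space is free on $X_n$.

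\emph{Counting $X_n$.} There are $q^n-1$ affine hyperplanes of $\mathbb A_n$ not through $0$, and each is the image of exactly $|\AGL_{n-1}(q)|=q^{n-1}|\GL_{n-1}(\mathbb F_q)|$ embeddings. Their product is $|\GL_n(\mathbb F_q)|$.

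The main obstacle is the $m=n-1$ case. It requires controlling the full Kov\'acs element $e_n$ without an explicit formula, and the plan handles this through the two-sided identity property combined with the rank argument showing that singular maps send off-origin hyperplanes onto linear ones.
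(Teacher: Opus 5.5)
Your proposal is correct and follows essentially the paper's proof. It uses Yoneda's lemma with $\widetilde\Phi_n$, kills every class whose linear span is proper by means of a singular projection and the Kov\'acs identity, and shows that any injective $[A,0]\circ f$ with $A$ singular has image $\im A$, hence lies in $Y$. The differences are minor: you check the $m=n$ case directly rather than citing the earlier Hom-space result, you get linear independence from idempotency rather than by projecting onto $k\bar X_n$, and you count $X_n$ via off-origin hyperplanes rather than via the bijection $f\mapsto[B\;c]$.
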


\begin{proof}
Since $\Delta_n \cong P_{\mathbb A_n} \widetilde{\Phi}_n$, Yoneda's lemma gives
\begin{equation}\label{eq:affine-hom-yoneda}
\Hom_{k\C} (\Delta_n, \Delta_m) \cong \im \left(\Delta_m(\widetilde{\Phi}_n): \Delta_m(\mathbb A_n) \longrightarrow \Delta_m(\mathbb A_n) \right).
\end{equation}
By definition,
\[
\Delta_m(\mathbb A_n) = k\C(\mathbb A_m, \mathbb A_n) \big/ \I_m(\mathbb A_n),
\]
where $\I_m(\mathbb A_n)$ is spanned by the noninjective affine maps. Hence there is a natural isomorphism
\[
\Delta_m(\mathbb A_n) \cong k\Inj_{\mathrm{aff}}(\mathbb A_m, \mathbb A_n),
\]
and every nonzero basis class of $\Delta_m(\mathbb A_n)$ is represented by a unique affine injection $f: \mathbb A_m \hookrightarrow \mathbb A_n$. In particular, $\Delta_m(\mathbb A_n) = 0$ if $m > n$.

Assume $m \leqslant n$ and let $\bar{f}$ be such a basis class, represented by $f(x)=Bx+c$, where $B:V_m\to V_n$ is injective. Let $L_f =\im(B) + \mathbb{F}_qc$. Then
\[
\dim_{\mathbb{F}_q} L_f=
\begin{cases}
m, & 0 \in \im(f),\\
m+1, & 0 \notin \im(f).
\end{cases}
\]
Let $e_n^{\mathrm{lin}}$ denote the identity of the singular matrix ideal in $kM_n(\mathbb F_q)$, and let $\widetilde{e}_n$ be its image under the algebra embedding
\[
kM_n(\mathbb F_q) \longrightarrow kE_n, \qquad [A] \longmapsto [A,0].
\]
Thus $\widetilde{\Phi}_n = 1 - \widetilde{e}_n$. In particular, for every singular linear endomorphism $p$ of $V_n$,
\begin{equation}\label{eq:affine-phi-kills-singular}
\widetilde{\Phi}_n [p,0] = 0.
\end{equation}

We prove the conclusion case by case.

\medskip
\noindent
\textbf{Case 1: $m \leqslant n-2$.} Then $\dim_{\mathbb{F}_q} L_f\leqslant n-1$. Choose a linear projection $p: V_n \to V_n$ onto a proper subspace containing $L_f$ and acting as the identity on $L_f$. Thus $p$ is singular and $[p,0] \circ f=f$. Using \eqref{eq:affine-phi-kills-singular}, we obtain
\[
\widetilde{\Phi}_n f = \widetilde{\Phi}_n[p,0] \circ f = 0.
\]
It follows that $\Delta_m (\widetilde{\Phi}_n) (\bar{f}) = 0$. Hence \eqref{eq:affine-hom-yoneda} gives
\[
\Hom_{k\C}(\Delta_n, \Delta_m) = 0.
\]

\medskip
\noindent
\textbf{Case 2: $m=n-1$.} Define
\begin{align*}
X_n & = \{f: \mathbb A_{n-1} \hookrightarrow \mathbb A_n \mid 0 \notin \im(f)\}, \quad \bar{X}_n = \{\bar{f} \mid f \in X_n \};\\
Y_n & = \{f: \mathbb A_{n-1} \hookrightarrow \mathbb A_n \mid 0 \in \im(f)\}, \quad \bar{Y}_n = \{\bar{f} \mid f \in Y_n \}.
\end{align*}
Then $\Delta_{n-1}(\mathbb A_n) = k\bar{X}_n \oplus k\bar{Y}_n$. Moreover, if $f \in Y_n$, then $\dim_{\mathbb{F}_q} L_f = n-1$, and the same argument shows $\widetilde{\Phi}_n f = 0$. Thus $\widetilde{\Phi}_n$ kills $kY_n$.

Now choose an arbitrary $f \in X_n$ as well as a singular matrix $A$ such that $[A \, 0]$ occurs in the support of $\widetilde{\Phi}_n$. Consider the affine map
\[
[A,0] \circ f: x \longmapsto ABx + Ac.
\]
We have two cases:
\begin{itemize}
\item If it is not injective, then the term $\overline{[A \, 0] \circ f}$ is 0, and hence contributes nothing to $\widetilde{\Phi}_n \bar{f}$.

\item If it is injective, then $\rank(AB) = n-1$. Moreover, since 0 is not contained in the image of $f$, we know that $c \notin \im(B)$, so $[B \; c]$ is contained in $\GL_n(\mathbb{F}_q)$. Since $[AB\;Ac] = A[B\;c]$ and $[B \; c]$ is invertible, it follows that $\rank[AB\;Ac] = \rank A$. But the matrix $A$ is singular, while $AB$ already has rank $n-1$. This forces $\rank A = n-1$ and necessarily $Ac \in \im(AB)$. Therefore
\[
\im([A,0] \circ f) = Ac+\im(AB) = \im(AB),
\]
which contains $0$. It follows that $[A \, 0] \circ f$ is contained in $Y_n$.
\end{itemize}

By the above analysis, every term of $\widetilde{\Phi}_n \bar{f}$ arising from a singular matrix appearing in $\widetilde{\Phi}_n$ is either zero in $\Delta_{n-1}(\mathbb A_n)$ or is represented by an element of $Y_n$. Since the identity term of $\widetilde\Phi_n$ contributes $\bar f$, there exists $y_f \in kY_n$ such that
\[
\Delta_{n-1} (\widetilde\Phi_n) (\bar f) = \bar f+\bar y_f.
\]
Moreover, $\widetilde\Phi_n$ annihilates $kY_n$, and hence $\Delta_{n-1}(\widetilde\Phi_n)$ annihilates $k\bar Y_n = k\{\bar{f} \mid f \in Y_n\}$. Since
\[
\Delta_{n-1}(\mathbb A_n) = k\bar X_n\oplus k\bar Y_n,
\]
it follows that
\[
\im \Delta_{n-1} (\widetilde\Phi_n) = \Delta_{n-1} (\widetilde\Phi_n) (k\bar{X}_n \oplus k\bar{Y}_n) = \Delta_{n-1} (\widetilde\Phi_n) (k\bar{X}_n)  = k\{\bar f + \bar y_f \mid f \in X_n\}.
\]
These elements are linearly independent: under the projection
\[
k\bar X_n \oplus k\bar Y_n \longrightarrow k\bar X_n,
\]
the element $\bar f+\bar y_f$ maps to the basis element $\bar f$. Thus
\[
\Hom_{k\C}(\Delta_n,\Delta_{n-1}) \cong \im \Delta_{n-1}(\widetilde\Phi_n) \cong k\bar{X}_n \cong kX_n
\]
as claimed.

\medskip
\noindent
\textbf{Case 3: $m=n$.} This follows from \cite[Proposition 3.1]{LiHom}.

\medskip

Finally, for $n \geqslant 1$, an element of $X_n$ has the form $f(x) = Bx+c$ with $\rank B = n-1$ and $c \notin \im(B)$. Thus we obtain a bijection
\[
X_n \longrightarrow\GL_n(\mathbb F_q), \quad f \longmapsto [B\;c].
\]
The equality on cardinalities follows.
\end{proof}

\begin{remark}\label{rem:affine-asymmetry}
The nonzero adjacent Hom-space in Proposition~\ref{prop:affine-hom} may be viewed as a manifestation of the asymmetry between the positive and negative parts of the affine Reedy category. For finite-dimensional vector spaces, injections and surjections occur in exactly the same number. This symmetry is closely related to linear duality, which exchanges injections and surjections.

For affine spaces the situation is different. An affine injection $\mathbb A_m \hookrightarrow \mathbb A_n$ consists of an injective linear map together with a translation vector in $V_n$, whereas an affine surjection $
\mathbb A_n \twoheadrightarrow \mathbb A_m$ consists of a surjective linear map together with a translation vector in $V_m$. It is clear that in general there are more affine injections.

This numerical asymmetry does not by itself imply the existence of nonzero morphisms in adjacent degrees, but it reflects the geometric source of that phenomenon. After choosing an origin in $\mathbb A_n$, affine hyperplane embeddings split into those whose image contains the origin and those whose image does not. The former are annihilated by the normalization idempotent, just as in the linear case, whereas the latter survive and form a basis of the adjacent Hom-space. Thus the translation part breaks the positive--negative symmetry present in $\VA_q$ and leaves precisely one nontrivial off-diagonal layer after normalization.
\end{remark}

Combining the above propositions, we obtain the following Dold-Kan equivalence for the category of finite affine spaces.

\begin{theorem} \label{thm:affine-dold-kan}
Let $\C = \AF_q$, and assume that $q$ is invertible in $k$. Let $\mathscr{K}_{\mathrm{af}}$ be the $k$-linear category with objects $n \geqslant 0$ and
\[
\mathscr{K}_{\mathrm{af}} (m,n) = \Hom_{k\C} (\Delta_n, \Delta_m),
\]
with composition induced by composition of homomorphisms between standard modules. Then
\[
\C \Mod \simeq \mathscr{K}_{\mathrm{af}} \Mod.
\]
Moreover, as $k$-modules,
\[
\mathscr{K}_{\mathrm{af}}(m,n) \cong
\begin{cases}
k\AGL_n(q), & m=n,\\
kX_n, & m=n-1,\\
0, & \text{otherwise},
\end{cases}
\]
where
\[
X_n = \{f: A_{n-1} \hookrightarrow A_n \mid 0 \notin \im(f)\}.
\]
In particular, every composite of two consecutive off-diagonal morphisms is zero.
\end{theorem}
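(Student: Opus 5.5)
The theorem has three parts: a Morita equivalence, a table of Hom-spaces, and the vanishing of composites of two off-diagonal morphisms. I would prove them in that order, relying on results already established.

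\textbf{Step 1: projective generators.} By Proposition~\ref{prop:affine-projective}, every standard module $\Delta_n = \Delta_{\mathbb A_n}$ is projective, since $\Delta_n \cong P_{\mathbb A_n}\widetilde\Phi_n$. Lemma~\ref{lem:projective generators} then shows that $\{\Delta_n\}_{n\geqslant 0}$ is a family of small projective generators of $\C\Mod$. Each $\Delta_n$ is a direct summand of a representable functor, so it is finitely generated, and the family is indexed by the objects of $\C$ because $\C$ is skeletal.

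\textbf{Step 2: the equivalence.} I would apply Morita theory for rings with several objects (the Freyd--Mitchell/Gabriel form). A family of small projective generators $\{\Delta_n\}$ induces an equivalence
\[
\C\Mod \longrightarrow \mathscr K_{\mathrm{af}}\Mod, \qquad M \longmapsto \bigl(n \mapsto \Hom_{k\C}(\Delta_n, M)\bigr).
\]
The variance convention $\mathscr K_{\mathrm{af}}(m,n) = \Hom(\Delta_n,\Delta_m)$ makes this assignment a covariant functor on $\mathscr K_{\mathrm{af}}$. This needs only a bookkeeping check of the convention, the same check used for (3)$\Rightarrow$(4) in Theorem~\ref{thm:core-groupoid-criterion}. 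The description of $\mathscr K_{\mathrm{af}}(m,n)$ as $k$-modules is then exactly Proposition~\ref{prop:affine-hom}.

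\textbf{Step 3: vanishing of composites.} Take a nonzero morphism in $\mathscr K_{\mathrm{af}}(n-1,n)$ and a nonzero morphism in $\mathscr K_{\mathrm{af}}(n-2,n-1)$. Their composite lies in $\mathscr K_{\mathrm{af}}(n-2,n) = \Hom(\Delta_n,\Delta_{n-2})$, which vanishes by Case~1 of Proposition~\ref{prop:affine-hom}. Hence every composite of two consecutive off-diagonal morphisms is zero.

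\textbf{Main obstacle.} The only delicate point is the Morita step: one must check that smallness together with generation in the sense of Lemma~\ref{lem:projective generators} (every representable is a summand of finitely many standard modules) suffices, with no finite-generation issue arising from infinitely many objects. Since each $P_{\mathbb A_n}$ is a finite direct sum of summands of the $\Delta_m$ with $m\leqslant n$, the standard argument applies: compare $\Hom(P_x,-)$ with $\Hom(\Delta_y,-)$ through the full subcategory of $\C\Mod$ on the $\Delta$'s.
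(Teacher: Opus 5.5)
Your proposal is correct and follows the paper's own argument. The paper's proof cites Lemma~\ref{lem:projective generators} together with Propositions~\ref{prop:affine-projective} and~\ref{prop:affine-hom} and invokes Morita theory, exactly as in your Steps 1 and 2. Your Step 3, which gets the vanishing of composites from $\mathscr K_{\mathrm{af}}(n-2,n)=\Hom_{k\C}(\Delta_n,\Delta_{n-2})=0$, just spells out what the paper leaves implicit in the Hom table.
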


\begin{proof}
The conclusion follows from Lemma \ref{lem:projective generators}, and Propositions \ref{prop:affine-projective} and \ref{prop:affine-hom}.
\end{proof}

\subsection{The transfer principle}

In this subsection we introduce a transfer principle, which tells us that, in favorable situations, normalization need not be reconstructed in a larger category but can be inherited directly from a suitable wide subcategory. We use this principle to deduce Dold-Kan equivalences for the semilinear category and the semiaffine category.

Let $\D$ be a wide generalized Reedy subcategory of a generalized Reedy category $\C$, with compatible positive and negative subcategories. Suppose that for every pair of objects $x,y \in \C$ one has
\[
\C^-(x,y) = G_y\,\D^-(x,y) = \{ \sigma f \mid \sigma \in G_y = \Aut_{\C}(y), \, f \in \D^-(x, y) \},
\]
Since $\D$ is a subcategory of $\C$, it follows that $k\End_{\D}(x)$ is a subalgebra of $k\End_{\C}(x)$ for $x \in \Ob(\C)$.

\begin{proposition}\label{prop:transfer-normalization}
Under the above assumption, if $\{\Phi_x\}_{x \in \Ob(\C)}$ is a family of normalization idempotents for $\D$, then regarded as elements of $k\End_{\C}(x)$, it is also a family of normalization idempotents for $\C$. In particular, every standard module of $\C$ is projective.
\end{proposition}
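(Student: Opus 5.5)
Fix an object $x$ and a normalization idempotent $\Phi_x\in k\End_{\D}(x)$ for $\D$. By definition this means two things. First, $1_x-\Phi_x\in kD^{\D}_x$, where $D^{\D}_x$ is the set of noninvertible endomorphisms of $x$ in $\D$. Second, $u\Phi_x=0$ for every noninvertible $u\in\D^-$ with source $x$. We must verify the same two conditions in $\C$, and then invoke Theorem~\ref{thm:normalization-correspondence} and Theorem~\ref{thm:main}.

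\textbf{Step 1: the boundary condition.} We must show $1_x-\Phi_x\in kD_x$, where $D_x=E_x\setminus G_x$ is computed in $\C$. It suffices to check that every noninvertible endomorphism $f$ of $x$ in $\D$ stays noninvertible in $\C$. We argue via the Reedy factorization. Since $\D$ is a generalized Reedy subcategory with compatible positive and negative parts, $f=f^+f^-$ with $f^\pm\in\D^\pm\subseteq\C^\pm$, and noninvertibility of $f$ in $\D$ means at least one of $f^\pm$ is a noninvertible morphism of $\D^\pm$. Such a morphism strictly changes degree, and degrees are the same in $\D$ and $\C$ by compatibility. Hence the same factor is noninvertible in $\C$ by (R2). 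By uniqueness of factorization in $\C$ up to automorphism, $f$ is not in $G_x$. Alternatively, if the positive/negative factor is noninvertible then $d$ strictly changes along it, so the composite endomorphism $f$ factors through an object of smaller degree, which forces $f\notin G_x$ in $\C$. Thus $\Phi_x$ has boundary value $\delta_{1_x}$ in $\C$ as well.

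\textbf{Step 2: annihilation by negative morphisms of $\C$.} Let $u:x\to y$ be a noninvertible morphism in $\C^-$. By hypothesis $u=\sigma f$ with $\sigma\in G_y$ and $f\in\D^-(x,y)$. The morphism $f$ must be noninvertible in $\D$, since otherwise $u$ would be invertible. Then
\[
u\Phi_x=\sigma(f\Phi_x)=0
\]
by the $\D$-normalization property. By Lemma~\ref{lem:balanced-annihilator}, the coefficient function of $\Phi_x$ is therefore balanced on $\mathcal N_x$ for $\C$. Together with Step 1, this makes it a normalized weighting. Theorem~\ref{thm:normalization-correspondence} then shows that $\Phi_x$ is a normalization idempotent for $\C$ with $P_x\Phi_x\cong\Delta_x$, and Theorem~\ref{thm:main} gives projectivity of $\Delta_x$.

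\textbf{Main obstacle.} The delicate point is Step 1: making sure that ``noninvertible in $\D$'' implies ``noninvertible in $\C$''. A morphism of $\D$ could a priori acquire an inverse in $\C$ that is not in $\D$. The argument rests on the compatibility of the degree functions and the Reedy structures, which is implicit in the phrase ``wide generalized Reedy subcategory with compatible positive and negative subcategories''. If that interpretation were weaker, one would instead use the hypothesis $\C^-=G\D^-$ directly: an invertible $f\in\D^-(x,x)$ lies in $\C^-\cap\C^+$ only if it is an isomorphism in $\C$, and degree considerations again exclude this.
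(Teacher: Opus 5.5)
Your proof is correct and follows essentially the same route as the paper: the boundary condition comes from $1_x-\Phi_x\in kD_x^{\D}\subseteq kD_x$, and annihilation comes from writing $u=\sigma v$ with $v\in\D^-$ noninvertible, so that $u\Phi_x=\sigma v\Phi_x=0$. The paper uses the inclusion $\I_x^{\D}(x)\subseteq\I_x^{\C}(x)$ without comment, and your Step 1 justifies it; one small correction is that a degree-changing morphism is noninvertible in $\C$ because $\C$ is skeletal, not by (R2), which does not affect the argument.
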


\begin{proof}
Fix $x \in \Ob(\D)$ and write $\Phi = \Phi_x$. Since $k\End_{\D}(x) \subseteq k\End_{\C}(x)$, $\Phi$ remains an idempotent in $k\End_{\C}(x)$. Let $\I_x^{\D}$ and $\I_x^{\C}$ be the standard ideals in the two categories. Since $\Phi$ is a normalization idempotent for $\D$, one has $\I_x^{\D}=P_x^{\D}(1_x-\Phi)$. In particular, $1_x - \Phi$ is contained in $\I_x^{\D}(x)$, and hence is contained in $\I_x^{\C}(x)$. Thus $P_x^{\C} (1_x-\Phi) \subseteq \I_x^{\C}$.

Conversely, let $u: x \to y$ be a noninvertible morphism in $\C^-$. By assumption, $u=\sigma v$ for some $\sigma \in G_y$ and $v \in \D^-(x,y)$. Since $\sigma$ is invertible, $v$ is noninvertible. As $\Phi$ is a normalization idempotent for $\D$, $v\Phi=0$. Hence $u\Phi = \sigma v\Phi = 0$, and therefore $u=u(1_x-\Phi)$. Since $\I_x^{\C}$ is generated by the noninvertible morphisms in $\C^-$, this gives $\I_x^{\C} \subseteq P_x^{\C}(1_x-\Phi)$. Consequently, $\I_x^{\C} = P_x^{\C}(1_x-\Phi)$. Thus we obtain the following decomposition
\[
P_x^{\C} = P_x^{\C}\Phi \oplus P_x^{\C}(1_x-\Phi) = P_x^{\C}\Phi \oplus \I_x^{\C}
\]
and hence
\[
\Delta_x^{\C} = P_x^{\C}/\I_x^{\C} \cong P_x^{\C}\Phi.
\]
Therefore, $\Phi$ is a normalization idempotent for $\C$.
\end{proof}

\begin{example} \label{examples}
The preceding proposition applies in several natural situations.

\medskip
\noindent
\textbf{(1) Affine spaces from vector spaces.}
Let $\D=\VA_q$, and let $\C = \AF_q$ be the category of finite-dimensional affine spaces and affine maps, after choosing the standard origin on each object. Then $\D$ is a wide subcategory of $\C$. Every affine surjection
\[
f(x) = Ax + b: \mathbb A_n \twoheadrightarrow \mathbb A_m
\]
becomes linear after postcomposition with the translation $y\mapsto y-b$ of the codomain, so the assumption in the above proposition holds. Therefore the Kov\'{a}cs--Kuhn normalization idempotents for $\VA_q$ also serve as normalization idempotents for $\C$. This gives another proof of Proposition~\ref{prop:affine-projective}.

\medskip
\noindent
\textbf{(2) Betweenness maps from order-preserving maps.}
Let $\D$ be the simplex category, and let $\C$ be the category of finite linearly ordered sets and betweenness-preserving maps. Every surjective betweenness-preserving map is either order-preserving or order-reversing. In the latter case, postcomposition with the order-reversing automorphism of the codomain makes it order-preserving. Thus the classical simplicial normalization idempotents also serve as normalization idempotents for $\C$. In particular, the standard modules of $\C$ are projective over every commutative coefficient ring.

\medskip
\noindent
\textbf{(3) Semilinear maps from linear maps.}
Let $\D=\VA_q$, and let $\C=\SVA_q$ be the category with the same objects but with semilinear maps, regarded as maps rather than as pairs consisting of a map and a field automorphism. Every semilinear map factors through its image as a surjection followed by an injection, and its image is an $\mathbb F_q$-linear subspace. It is also routine to check that these factorizations are unique up to isomorphism of the intermediate image. Thus $\C$ is a generalized Reedy category. For $n \geqslant 1$, its automorphism group at $V_n$ is the semilinear group $\Gamma L_n(q)$, while $\Aut_{\C}(V_0)$ is trivial.

Let $f:V_n\twoheadrightarrow V_m$ be a semilinear surjection. If $m>0$, then $f$ is nonzero and hence has a uniquely determined associated field automorphism $\sigma\in\Aut(\mathbb F_q)$. Postcomposition with a semilinear automorphism of $V_m$ inducing $\sigma^{-1}$ turns $f$ into a linear surjection. If $m=0$, then $f$ is already the unique linear map to $V_0$. Hence
\[
\Cm(V_n,V_m) = \Aut_{\C}(V_m)\,\D^-(V_n,V_m)
\]
for all $m,n$. It follows from Proposition~\ref{prop:transfer-normalization} that, whenever $q$ is invertible in $k$, the Kov\'{a}cs--Kuhn normalization idempotents also give normalization idempotents for the semilinear category.

\medskip
\noindent
\textbf{(4) Semiaffine maps from affine maps.}
Similarly, let $\D=\AF_q$ and enlarge it to the category $\C=\SAF_q$ of finite-dimensional affine spaces and semiaffine maps, again regarded simply as maps. Every semiaffine map factors through its image as a surjection followed by an injection, and the usual image factorization makes $\C$ into a generalized Reedy category.

Let $f: A_n \twoheadrightarrow A_m$ be a semiaffine surjection. If $m>0$, then its associated field automorphism is uniquely determined, and postcomposition with a suitable semiaffine automorphism of $A_m$ makes $f$ affine. If $m=0$, then $f$ is the unique map to $A_0$ and is already affine. Consequently,
\[
\Cm(A_n,A_m) = \Aut_{\C}(A_m) \,\D^-(A_n,A_m)
\]
for all $m,n$. Therefore Proposition~\ref{prop:transfer-normalization} applies, and the affine normalization idempotents transfer to the semiaffine category.

\end{example}

We are ready to deduce Dold-Kan equivalences for the semilinear category $\SVA_q$ and the semiaffine category $\SAF_q$.

\begin{corollary}\label{cor:semilinear-semiaffine}
Assume that $q$ is invertible in $k$.

\begin{enumerate}
\item Let $\C = \SVA_q$ be the category of finite-dimensional $\mathbb F_q$-vector spaces and semilinear maps. Then
\[
\C \Mod \simeq \prod_{n \geqslant 0} k\Gamma L_n(q)\Mod,
\]
where we set $\Gamma L_0(q) = A \Gamma L_0(q) = 1$ for convenience.

\item Let $\C = \SAF_q$ be the category of finite-dimensional affine spaces over $\mathbb F_q$ and semiaffine maps, and assume that $q$ is invertible in $k$. Let $\mathscr{K}_{\mathrm{saf}}$ be the $k$-linear category with objects $n \geqslant 0$ and
\[
\mathscr{K}_{\mathrm{saf}} (m,n) = \Hom_{k\C} (\Delta_n, \Delta_m),
\]
with composition induced by composition of homomorphisms between standard modules. Then
\[
\C \Mod \simeq \mathscr{K}_{\mathrm{saf}} \Mod.
\]
Moreover, as $k$-modules,
\[
\mathscr K_{\mathrm{saf}}(m, n) \cong
\begin{cases}
kA\Gamma L_n(q), & m=n,\\
kX_n, & m=n-1,\\
0, & \text{otherwise},
\end{cases}
\]
and $X_n = \left\{ f:\mathbb A_{n-1}\hookrightarrow\mathbb A_n \ \middle|\ f\text{ is semiaffine and } 0 \notin \im(f) \right\}$.
\end{enumerate}
\end{corollary}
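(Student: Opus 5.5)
Both parts follow the pattern already used for $\AF_q$: projectivity of all standard modules, then a Hom-space computation, then Morita theory. Projectivity is immediate. By Example~\ref{examples}(3),(4), the hypothesis of Proposition~\ref{prop:transfer-normalization} holds for $\VA_q\subseteq\SVA_q$ and $\AF_q\subseteq\SAF_q$. Hence the Kov\'{a}cs--Kuhn idempotents $\Phi_n$, respectively their affine versions $\widetilde\Phi_n$ from Proposition~\ref{prop:affine-projective}, are normalization idempotents in the larger categories. So every $\Delta_n\cong P_n\Phi_n$ is projective, and Lemma~\ref{lem:projective generators} shows the standard modules are small projective generators. It then remains to compute $\Hom_{k\C}(\Delta_n,\Delta_m)\cong \im\bigl(\Delta_m(\Phi_n)\bigr)$ via Yoneda. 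Here $\Delta_m(V_n)$, respectively $\Delta_m(\mathbb A_n)$, is free on the semilinear (semiaffine) injections.

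For (1), I would use Theorem~\ref{thm:core-groupoid-criterion} rather than compute directly, checking condition (2). First, every positive morphism, a semilinear injection $u:V_m\hookrightarrow V_n$ with field automorphism $\sigma$, has a left inverse: take a semilinear projection onto $\im u$ followed by $u^{-1}$ on its image. If $m=0$ this is trivial. Second, $e_n=1-\Phi_n$ must be a two-sided identity of $kD_n$ in $k\End_{\SVA_q}(V_n)$. A singular semilinear $p$ factors as $p=\tau\circ p'$, where $p'$ is a singular linear map and $\tau$ is a semilinear automorphism. Indeed, write $p=\tau_0 \circ \ell$ with $\tau_0$ the coordinatewise Frobenius-type automorphism and $\ell$ linear. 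Then $p e_n=\tau p' e_n=\tau p'=p$, since $e_n$ is the Kov\'{a}cs identity for singular linear maps. For the other side we need $e_n p = p$. Write $p = \ell'\circ\tau_0$ with $\ell'$ linear and singular; then $e_n\ell'=\ell'$. This gives uniqueness by Corollary~\ref{cor:normalization-uniqueness}. Theorem~\ref{thm:core-groupoid-criterion} then gives $\C\Mod\simeq\prod_n k\Gamma L_n(q)\Mod$. For $n=0$ the group is trivial, matching the convention in the statement.

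For (2) I would repeat the proof of Proposition~\ref{prop:affine-hom} with $\widetilde\Phi_n$ verbatim, now with $f(x)=B\sigma(x)+c$ a semiaffine injection. The quantity that matters is the subspace $L_f=\im(B\sigma)+\mathbb F_q c=\im B+\mathbb F_q c$, since $\sigma$ acts bijectively on $V_m$. In Case~1 and for $f$ with $0\in\im f$, a singular linear projection $p$ with $[p,0]f=f$ again yields $\widetilde\Phi_n f=0$. In Case~2 the rank argument is unchanged: $A[B\;c]$ has rank $\rank A$, since $[B\;c]$ is an invertible matrix when $0\notin\im f$. This gives $\Delta_{n-1}(\widetilde\Phi_n)(\bar f)=\bar f+\bar y_f$ with $y_f$ spanned by injections whose image contains $0$, and the projection argument gives freeness on $X_n$. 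For $m=n$, \cite[Proposition 3.1]{LiHom} gives $\End(\Delta_n)\cong kA\Gamma L_n(q)$. For $m>n$, $\Delta_m(\mathbb A_n)=0$. The equivalence $\C\Mod\simeq\mathscr K_{\mathrm{saf}}\Mod$ is then standard Morita theory for a family of small projective generators.

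The step I expect to be most delicate is the two-sided identity check in (1). It requires that both left and right multiplication by $e_n$ fix semilinear singular maps. I would handle it uniformly by the factorization $p=\tau p' = p''\tau'$ with linear singular $p',p''$ and semilinear automorphisms $\tau,\tau'$. The remaining points in the semiaffine case are minor: confirming that $\widetilde e_n$ acts correctly, and that the $m=n$ case is covered by \cite{LiHom} in the present generality.
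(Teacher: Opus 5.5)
Your proposal is correct. Part (2) follows the paper's proof: transfer of the affine idempotents $\widetilde\Phi_n$ by Proposition~\ref{prop:transfer-normalization}, then the case analysis of Proposition~\ref{prop:affine-hom} with $L_f=\im B+\mathbb F_q c$ and the unchanged rank argument. For $m=n$ you need not worry about the scope of \cite{LiHom}. Indeed, $\Delta_n(\mathbb A_n)\cong kA\Gamma L_n(q)$, and $\widetilde\Phi_n$ acts on it as the identity, because $\widetilde e_n g$ is a combination of noninjective maps for every invertible $g$.

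For part (1) your route differs from the paper's. The paper computes the Hom-spaces directly. For a semilinear injection $f:V_m\hookrightarrow V_n$ with $m<n$, it chooses a singular \emph{linear} projection $p$ with $pf=f$, so $\Phi_nf=\Phi_npf=0$. In degree $n$ it gets $k\Gamma L_n(q)$, and Morita theory for this Hom-orthogonal family finishes the proof.

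You instead verify condition (2) of Theorem~\ref{thm:core-groupoid-criterion}. Using the criterion as a black box forces you to upgrade Kov\'{a}cs' property from linear to all semilinear singular maps, which is what your factorization $p=\ell'\tau_0$ does, since that condition quantifies over all of $kD_n$. Two remarks on this:
\begin{itemize}
\item Only $e_np=p$ has content there. The identity $pe_n=p$ is automatic, since $kD_n\Phi_n=0$ for any normalization idempotent.
\item For your left inverse $g=u^{-1}\pi$ one has $ug=\pi$, which is linear. So the criterion's internal argument reduces to exactly the paper's computation.
\end{itemize}

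What your route buys in return is extra structure: uniqueness and centrality of $\Phi_n$ in $k\End_{\SVA_q}(V_n)$, and the splitting $k\End_{\SVA_q}(V_n)\cong kD_n\times k\Gamma L_n(q)$. It also shows $\SVA_q$ as a concrete instance of the general core-groupoid criterion. The paper's route is shorter and runs parallel to the semiaffine computation.
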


\begin{proof}
By the preceding transfer proposition, the Kov\'{a}cs--Kuhn normalization idempotents for $\VA_q$ also serve as normalization idempotents for the semilinear category, and the affine normalization idempotents serve as normalization idempotents for the semiaffine category. Hence in both cases all standard modules are projective.

For the semilinear category, let $f: V_m \hookrightarrow V_n$ be a semilinear injection with $m < n$. Its image is an $\mathbb F_q$-linear subspace of $V_n$. Choose a singular linear projection $p: V_n \to V_n$ which restricts to the identity on $\im(f)$. Then $pf = f$, $\Phi_n p = 0$, and therefore $\Phi_n f = 0$. Thus the standard modules are Hom-orthogonal in different degrees. In degree $n$, the injective semilinear endomorphisms are precisely elements of $\Gamma L_n(q)$, so
\[
\Hom(\Delta_n,\Delta_m)\cong
\begin{cases}
k\Gamma L_n(q), & m=n,\\
0, & m\neq n.
\end{cases}
\]
Morita theory therefore gives
\[
\SVA_q \Mod \simeq \prod_{n \geqslant 0} k\Gamma L_n(q) \Mod.
\]

The semiaffine case is proved in exactly the same way as the affine case. Indeed, write a semiaffine injection as $f(x) = B\sigma(x) + c$, where $\sigma \in \Aut(\mathbb F_q)$ and $B$ is injective. Let $L_f = \im(B) + \mathbb F_qc$.
\begin{itemize}
\item If $m \leqslant n-2$, then $\dim_{\mathbb{F}_q} L_f \leqslant n-1$, so a singular linear projection fixing $L_f$ shows that the normalization idempotent annihilates $f$.

\item If $m = n-1$ and $0 \in \im(f)$, the same argument shows that $f$ is annihilated by the normalization idempotent. In the case that $0 \notin \im(f)$, one has $c \notin \im(B)$, and hence $[B \;c] \in \GL_n(\mathbb F_q)$. The rank argument used in Proposition \ref{prop:affine-hom} is unchanged: for a singular matrix $A$, whenever $[A,0] \circ f$ remains injective, its image contains $0$.
\end{itemize}

It follows that
\[
\Hom(\Delta_n, \Delta_m) \cong
\begin{cases}
kA\Gamma L_n(q), & m=n,\\
kX_n, & m=n-1,\\
0, & \text{otherwise}.
\end{cases}
\]
The asserted Morita equivalence then follows.
\end{proof}

\subsection{Uniformly continuous representations of universal transformation monoids}

We conclude by relating the preceding equivalences to uniformly continuous representations of infinite transformation monoids. This is motivated by the reconstruction results of \cite{LiRel}, where the linear case $\VA_q$ was considered.

Let
\[
\Omega = \varinjlim_n\mathbb F_q^n = \bigoplus_{i\geqslant1}\mathbb F_qe_i
\]
be the infinite-dimensional vector space over $\mathbb F_q$. We consider the following transformation monoids:
\begin{align*}
\mathscr E_{\mathrm{sl}} & = \{f: \Omega \to \Omega \mid f \text{ is semilinear}\},\\
\mathscr E_{\mathrm{af}} & = \{f: x \mapsto Ax+b \mid A \in \End_{\mathbb F_q}(\Omega), \ b \in \Omega\},\\
\mathscr E_{\mathrm{saf}} & = \{f: \Omega \to \Omega \mid f \text{ is semiaffine}\}.
\end{align*}
Thus an element of $\mathscr E_{\mathrm{saf}}$ has the form $x \mapsto A\sigma(x)+b$, where $\sigma \in \Aut(\mathbb F_q)$, $A$ is $\mathbb F_q$-linear, and $b \in \Omega$. We equip these monoids with the topology of pointwise convergence and its associated uniform structure. Their groups of units are respectively
\[
\Gamma L(\Omega), \qquad \AGL(\Omega), \qquad A\Gamma L(\Omega).
\]

Let $\mathscr E$ be one of $\mathscr{E}_{\mathrm{sl}}$, $\mathscr E_{\mathrm{af}}$, or $\mathscr E_{\mathrm{saf}}$. Recall that a representation of $\mathscr E$ on a $k$-module $V$ equipped with the discrete topology is called \emph{uniformly continuous} if the representation map
\[
\mathscr E \longrightarrow \End_k(V)
\]
is uniformly continuous with respect to the pointwise uniformity on $\End_k(V)$. Equivalently, for every $v\in V$ there exists a finite subset $A \subseteq \Omega$ such that
\[
f|_A = g|_A \qquad \Longrightarrow \qquad fv = gv
\]
for all $f, g \in\mathscr E$. We write $k\mathscr E\Mod^{\mathrm{uc}}$ for the category of uniformly continuous $k\mathscr E$-modules.

Now we apply the reconstruction result in \cite{LiRel} to investigate uniformly continuous representations of $\mathscr E$. Note that these monoids are universal for the corresponding finite categories. Indeed, every semilinear map between finite-dimensional subspaces of $\Omega$ extends to a semilinear endomorphism of $\Omega$. Similarly, every affine map between finite-dimensional affine subspaces extends to an affine endomorphism of $\Omega$, and the same assertion holds for semiaffine maps. For instance, if $f: a+U \to b+W$ is affine and $f(a+u) = b+L(u)$, one may extend $L: U \to W$ to a linear endomorphism $\widetilde L$ of $\Omega$ and define $\widetilde f(x) = b + \widetilde L(x-a)$. The semiaffine case is identical, with $L$ replaced by a semilinear map.

There is a minor difference from the linear argument in \cite{LiRel}. Two semilinear maps agreeing on a finite subset need not agree on its linear span, since their associated field automorphisms may be different. This causes no difficulty here: since $\mathbb F_q$ is finite, every finite-dimensional linear or affine subspace is finite, and such subspaces form a cofinal family among the finite subsets used to define the uniform structure. Thus uniformly continuous representations may be tested on finite-dimensional subspaces in the semilinear case and on finite-dimensional affine subspaces in the affine and semiaffine cases.

Applying \cite[Corollary~7.20 and Subsection~7.7]{LiRel}, we obtain the following consequence.

\begin{corollary}\label{cor:universal-transformation-monoids}
Let $k$ be a commutative ring. There are equivalences of categories:
\begin{align*}
k\mathscr E_{\mathrm{sl}} \Mod^{\mathrm{uc}} & \simeq \SVA_q\Mod,\\
k\mathscr E_{\mathrm{af}} \Mod^{\mathrm{uc}} & \simeq \AF_q\Mod,\\
k\mathscr E_{\mathrm{saf}} \Mod^{\mathrm{uc}} & \simeq \SAF_q\Mod.
\end{align*}

If $q$ is invertible in $k$, then we have:
\begin{align*}
k\mathscr E_{\mathrm{sl}}\Mod^{\mathrm{uc}} & \simeq \prod_{n\geqslant0}k\Gamma L_n(q)\Mod,\\
k\mathscr E_{\mathrm{af}}\Mod^{\mathrm{uc}} & \simeq \mathscr K_{\mathrm{af}}\Mod,\\
k\mathscr E_{\mathrm{saf}}\Mod^{\mathrm{uc}} & \simeq \mathscr K_{\mathrm{saf}}\Mod.
\end{align*}
\end{corollary}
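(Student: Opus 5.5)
The plan is to obtain the first three equivalences directly from the reconstruction theorem of \cite[Corollary~7.20 and Subsection~7.7]{LiRel}, and the last three by composing them with the Dold--Kan--type equivalences proved above.

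First I check the hypotheses of the reconstruction theorem for each monoid $\mathscr E\in\{\mathscr E_{\mathrm{sl}},\mathscr E_{\mathrm{af}},\mathscr E_{\mathrm{saf}}\}$. The relevant finite category is $\SVA_q$, $\AF_q$ or $\SAF_q$, realized as the category of finite-dimensional linear (resp.\ affine) subspaces of $\Omega$ with the induced maps. Three inputs are needed.

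\emph{Extension.} Every morphism between finite subspaces is the restriction of an element of $\mathscr E$. This holds by the explicit construction $\widetilde f(x)=b+\widetilde L(x-a)$ given before the statement, and its semilinear variant.

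\emph{Homogeneity.} Any two finite-dimensional subspaces of the same dimension are related by a unit of $\mathscr E$. This follows by extending bases, or affine frames, using the infinite dimensionality of $\Omega$.

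\emph{Cofinality.} The finite-dimensional (affine) subspaces are cofinal among the finite subsets defining the pointwise uniformity. This is the observation already made above: since $\mathbb F_q$ is finite, the span of a finite set is finite. So the uniform continuity condition may be tested on finite-dimensional subspaces in the semilinear case, and on affine subspaces in the affine and semiaffine cases.

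Under these hypotheses, the construction of \cite{LiRel} sends a uniformly continuous module $V$ to the $\C$-module $U\mapsto V^{\mathrm{Stab}(U)}$. Its inverse sends a $\C$-module $M$ to the colimit over the subspaces $U$ of $M(U)$. This gives the first three equivalences for an arbitrary commutative ring $k$.

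Next, assume $q$ is invertible in $k$. I compose the equivalences just obtained with those already established in the paper:
\begin{itemize}
\item Corollary~\ref{cor:semilinear-semiaffine}(1) gives $\SVA_q\Mod\simeq\prod_{n}k\Gamma L_n(q)\Mod$;
\item Theorem~\ref{thm:affine-dold-kan} gives $\AF_q\Mod\simeq\mathscr K_{\mathrm{af}}\Mod$;
\item Corollary~\ref{cor:semilinear-semiaffine}(2) gives $\SAF_q\Mod\simeq\mathscr K_{\mathrm{saf}}\Mod$.
\end{itemize}
Composing equivalences of categories yields the second list of equivalences.

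The main obstacle is the semilinear and semiaffine cases. There, the stabilizer of a subspace $U$ in $\mathscr E$ is not simply the set of maps fixing $U$ pointwise as a linear object, because a map agreeing with the identity on a finite set may still twist by a field automorphism. I would first verify that pointwise agreement on $U$ forces the field automorphism to be trivial whenever $U$ contains a nonzero vector together with its scalar multiples. That is automatic for a subspace of positive dimension, and the zero subspace is handled separately. With this, the morphism sets of the category reconstructed in \cite{LiRel}, namely restrictions of elements of $\mathscr E$ to $U$, are exactly the semilinear (resp.\ semiaffine) maps. This identifies the reconstructed category with $\SVA_q$ (resp.\ $\SAF_q$), rather than with a category whose morphisms remember a field automorphism.
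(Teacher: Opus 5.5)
Your overall route is the paper's. You identify the category of restrictions to finite-dimensional linear (resp.\ affine) subspaces of $\Omega$ with $\SVA_q$, $\AF_q$, $\SAF_q$ via the extension property and cofinality, invoke \cite[Corollary~7.20]{LiRel}, and compose with Corollary~\ref{cor:semilinear-semiaffine} and Theorem~\ref{thm:affine-dold-kan}. Your handling of the field-automorphism twist is also fine, and it matches the paper's convention of regarding semilinear maps simply as maps.

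What is missing is the one hypothesis the paper's proof checks besides extension: every inclusion of finite-dimensional linear (resp.\ affine) subspaces admits a retraction of the same type. Examples are a linear projection onto $U$ along a complement, or $x\mapsto a+p(x-a)$ for $U=a+U_0$; these are in particular semilinear, resp.\ semiaffine. Your ``homogeneity'' condition concerns units and does not substitute for this, and your own description of the reconstruction functor depends on it.

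A retraction gives an idempotent $e_U\in\mathscr E$ with image $U$ that fixes $U$ pointwise, so $V^{\mathrm{Stab}(U)}=e_UV$. This is what makes the construction work:
\begin{itemize}
\item If $\tilde\phi,\tilde\phi'\in\mathscr E$ both extend $\phi\colon U\to W$, then $\tilde\phi e_U=\tilde\phi' e_U$ as maps. Hence $\tilde\phi v=\tilde\phi' v$ for $v\in V^{\mathrm{Stab}(U)}$.
\item One has $g\tilde\phi e_U=\tilde\phi e_U$ for $g\in\mathrm{Stab}(W)$, so $\tilde\phi v\in V^{\mathrm{Stab}(W)}$.
\item Uniform continuity plus cofinality give $V=\bigcup_U e_UV$.
\end{itemize}
Without such idempotents, two noninvertible extensions of a noninjective $\phi$ cannot be related through $\mathrm{Stab}(U)$ by units. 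Then $U\mapsto V^{\mathrm{Stab}(U)}$ is not evidently a functor on the whole finite category, let alone an equivalence. The check is immediate in all three cases, so adding it closes the gap.
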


\begin{proof}
For each of the three transformation monoids, the category obtained from restrictions to finite-dimensional linear or affine subspaces is precisely the corresponding category $\SVA_q$, $\AF_q$, or $\SAF_q$, by the extension property described above. Every inclusion between the relevant finite subspaces admits a retraction of the same type. Therefore \cite[Corollary~7.20]{LiRel} gives the first three equivalences.

When $q$ is invertible in $k$, the remaining equivalences follow from the Dold--Kan--type equivalences established above for the semilinear, affine, and semiaffine categories.
\end{proof}

\end{document}